\documentclass[10pt]{amsart}
\usepackage{amssymb}
\usepackage{amsthm,amsmath}
\usepackage{cite}

\usepackage{MnSymbol}

\title[Newton Series Representation]{Newton Series Representation of Completely Monotone Functions}

\author{Thomas Lamby}
\address{University of Luxembourg, Department of Mathematics, Maison du Nombre, 6, avenue de la Fonte, L-4364 Esch-sur-Alzette, Luxembourg}
\email{thomas.lamby[at]uni.lu}

\author{Jean-Luc Marichal}
\address{University of Luxembourg, Department of Mathematics, Maison du Nombre, 6, avenue de la Fonte, L-4364 Esch-sur-Alzette, Luxembourg}
\email{jean-luc.marichal[at]uni.lu}

\author{Na\"im Zena\"idi}\thanks{Corresponding author: Na\"im Zena\"idi, University of Li\`ege, Department of Mathematics, All\'ee de la D\'ecouverte, 12 - B37, B-4000 Li\`ege, Belgium. Email: nzenaidi[at]uliege.be}
\address{University of Li\`ege, Department of Mathematics, All\'ee de la D\'ecouverte, 12 - B37, B-4000 Li\`ege, Belgium}
\email{nzenaidi[at]uliege.be}

\date{October 13, 2025}

\begin{document}

\theoremstyle{plain}

\newtheorem{theorem}{Theorem}[section]
\newtheorem{lemma}[theorem]{Lemma}
\newtheorem{proposition}[theorem]{Proposition}
\newtheorem{corollary}[theorem]{Corollary}
\newtheorem{fact}[theorem]{Fact}

\theoremstyle{definition}

\newtheorem{definition}[theorem]{Definition}

\newtheorem{ex}[theorem]{Example}
    \newenvironment{example}
    {\renewcommand{\qedsymbol}{$\lozenge$}
    \pushQED{\qed}\begin{ex}}
    {\popQED\end{ex}}

\newtheorem{rem}[theorem]{Remark}
    \newenvironment{remark}
    {\renewcommand{\qedsymbol}{$\lozenge$}
    \pushQED{\qed}\begin{rem}}
    {\popQED\end{rem}}

\theoremstyle{remark}

\newtheorem{claim}{Claim}

\newcommand{\R}{\mathbb{R}}
\newcommand{\N}{\mathbb{N}}
\newcommand{\Z}{\mathbb{Z}}
\newcommand{\C}{\mathbb{C}}
\newcommand{\id}{\mathrm{id}}
\newcommand{\cC}{\mathcal{C}}
\newcommand{\cD}{\mathcal{D}}
\newcommand{\cK}{\mathcal{K}}
\newcommand{\cAM}{\mathrm{AM}}
\newcommand{\cCM}{\mathrm{CM}}
\newcommand{\cRM}{\mathrm{RM}}
\newcommand{\ran}{\mathrm{ran}}

\def\tchoose#1#2{{\textstyle{{{#1}\choose{#2}}}}}

\begin{abstract}
We prove that every completely monotone function defined on a right-unbounded open interval admits a Newton series expansion at every point of that interval. This result can be viewed as an analog of Bernstein's little theorem for absolutely monotone functions. As an application, we use it to study principal indefinite sums, which are constructed via a broad generalization of Bohr-Mollerup's theorem.
\end{abstract}

\subjclass[2020]{Primary: 26A48, 26A51, 39A70, 41A58; Secondary: 33B15, 39A12, 40A30.}

\keywords{Completely monotone function, Absolutely monotone function, Real analyticity, Newton series expansion, Principal indefinite sum, Higher-order convexity}

\maketitle
\section{Introduction}

A real-valued function $f\colon\, I\to\R$, defined on an open interval $I$, is called \emph{completely monotone} if it is infinitely differentiable and satisfies
$$
(-1)^n f^{(n)}(x) ~\geq ~0 \quad\text{for all $x\in I$ and all $n\in\N$},
$$
where $f^{(n)}$ denotes the $n$th derivative of $f$, and $\N$ stands for the set of nonnegative integers. As an example, the function $f\colon\,\R_+\to\R$, defined on the open half-line $\R_+=(0,\infty)$ by $f(x)=1/x$, is completely monotone.

This concept, introduced by Bernstein \cite{Ber14} in 1914 (see also \cite{Ber29}), has several noteworthy consequences. Among them, the following two results are particularly significant:

\smallskip

\begin{list}{$\bullet$}{\setlength{\leftmargin}{3.5ex} \setlength{\itemindent}{0ex}}
  \item\emph{Bernstein's little theorem}: Any completely monotone function $f\colon\, I\to\R$, defined on an open interval $I$, is real analytic. That is, for any point $a\in I$, there exists an open neighborhood $U\subset I$ of $a$ such that
    $$
    f(x) ~=~ \sum_{k=0}^{\infty}\frac{f^{(k)}(a)}{k!}\, (x-a)^k\qquad (x\in U),
    $$
    which means that $f$ coincides with its Taylor series expansion about $a$. We revisit this result in Section 2.
  \item\emph{Bernstein's theorem on monotone functions}: A function $f\colon\,\R_+\to\R$ is completely monotone if and only if it is representable as a Laplace type integral of the form
    $$
    f(x) ~=~ \int_0^{\infty}e^{-xt}\, d\mu(t)\qquad (x>0),
    $$
    where $\mu$ is the Lebesgue-Stieltjes measure induced by an increasing function from the interval $[0,\infty)$ into itself.
\end{list}

\smallskip

We note that many familiar functions from calculus are completely monotone, and the two theorems above then yield classical series and integral representations for these functions. For further background, see, for example, Boas~\cite{Boa71}, Krantz and Parks~\cite{KraPar02}, Mahajan and Ross \cite{MahRos82}, and Schilling \emph{et al.}~\cite{SchSonVon12}.

In this paper, we offer an alternative series representation for completely monotone functions. Specifically, in Section 3, we establish the remarkable fact that \emph{every completely monotone function $f\colon\, I\to\R$, defined on a right-unbounded open interval $I$, admits a Newton series expansion at every point $a\in I$ that is valid throughout $I$}. More precisely, for any point $a\in I$, the following identity holds:
$$
f(x) ~=~ \sum_{k=0}^{\infty}\frac{\Delta^k f(a)}{k!}\, (x-a)^{\underline{k}}\qquad (x\in I),
$$
or, equivalently,
\begin{equation}\label{eq:New576}
f(x) ~=~ \sum_{k=0}^{\infty}\tchoose{x-a}{k}\,\Delta^k f(a)\qquad (x\in I),
\end{equation}
where $\Delta$ denotes the classical forward difference operator, and where the \emph{falling factorial power} $x^{\underline{k}}$ (see Graham \emph{et al.}\ \cite[pp.~47--48]{GraKnuPat94}) is defined by
$$
x^{\underline{k}} ~=~ x(x-1)~\cdots ~(x-k+1) ~=~ k!\,\tchoose{x}{k}\qquad (x\in\R,~k\in\N). 
$$
We actually show that this result still holds under the weaker assumption that only a higher-order derivative of $f$ is completely monotone. It is also noteworthy that the series expansion in \eqref{eq:New576} is valid not merely in a neighborhood of $a$, but throughout the entire interval $I$. Furthermore, we observe that the requirement for $I$ to be right-unbounded arises naturally from the fact that the Newton series expansion at $a\in I$ evaluates the function at the points $a+k$ for every $k\in\N$.

Possessing a Newton series expansion is a fairly restrictive condition; many analytic functions---such as the classical exponential function $f(x)=e^x$---fail to admit one (see Remark~\ref{rem:BinThm}). However, in Section 4, we show that functions belonging to a notable class---namely, the \emph{principal indefinite sums}, which were recently introduced via a significant generalization of the Bohr-Mollerup theorem and thoroughly examined in \cite{MarZen22, MarZen24}---typically admit such expansions.

\section{Analyticity of Completely Monotone Functions}

Recall that a real-valued function $f\colon\, I\to\R$, defined on an open interval $I$, is called \emph{absolutely monotone} if it is infinitely differentiable and satisfies
$$
f^{(n)}(x) ~\geq ~0 \quad\text{for all $x\in I$ and all $n\in\N$.}
$$
Classical examples of absolutely monotone functions include $f(x)=e^x$, when defined on $\R$, and $g(x)=x^2$, when defined on $\R_+$.

This concept is closely related to that of \emph{completely monotone} functions. In fact, we can readily see that a function $f(x)$ is completely monotone on $I$ if and only if the function $f(-x)$ is absolutely monotone on the reflected interval
$$
{-I} ~=~ \{-x : x\in I\}.
$$

In this section, we present and prove Bernstein's little theorem under a slightly relaxed assumption: namely, that the function $f$, or one of its derivatives, is absolutely monotone. The proof presented here is a straightforward adaptation of the arguments given in Krantz and Parks \cite[Theorem 3.4.1]{KraPar02}. We include it in full both for its intrinsic interest and because similar techniques will be used in the next section to establish Newton series representations of completely monotone functions.

In light of the preceding observations, Bernstein's little theorem asserts that \emph{a sufficient condition for an infinitely differentiable function on an open real interval to be real analytic is that one of its derivatives is absolutely or completely monotone}. For example, the logarithm function $f(x)=\ln x$ has a completely monotone derivative on $\R_+$, and is thus real analytic on its domain of definition.

\begin{theorem}[Bernstein's Little Theorem]\label{thm:21BerLiTh}
Let $I$ be a real open interval. Suppose that a function $f\colon\, I\to\R$ is infinitely differentiable and that $f^{(q)}$ is absolutely monotone for some $q\in\N$. Then $f$ is real analytic on $I$.
\end{theorem}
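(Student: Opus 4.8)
The plan is to reduce first to the case $q=0$ and then run the classical Taylor--remainder argument, taking advantage of the sign conditions. For the reduction, recall that real analyticity is a local property and that an antiderivative of a real analytic function is again real analytic, since its Taylor coefficients are obtained by integrating term by term and this does not decrease the radius of convergence. Applying this observation $q$ times, it suffices to establish the theorem when $f$ itself is absolutely monotone, i.e.\ $f^{(n)}\geq 0$ on $I$ for every $n\in\N$.

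So assume $f$ is absolutely monotone, fix $a\in I$, and choose $b\in I$ with $b>a$. I would use Taylor's formula with the integral form of the remainder: for $x\in I$ and $n\geq 1$,
\[
f(x) ~=~ \sum_{k=0}^{n-1}\frac{f^{(k)}(a)}{k!}\,(x-a)^k \;+\; R_n(x),\qquad R_n(x)~=~\frac{(x-a)^n}{(n-1)!}\int_0^1 (1-s)^{n-1}\,f^{(n)}\bigl(a+s(x-a)\bigr)\,ds .
\]
For $a\leq x\leq b$ and $s\in[0,1]$ one has $a+s(x-a)\leq a+s(b-a)$, and since $f^{(n)}$ is nonnegative and nondecreasing (the latter because $f^{(n+1)}\geq 0$), replacing $x$ by $b$ only enlarges the integrand. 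Factoring $(x-a)^n=\bigl(\tfrac{x-a}{b-a}\bigr)^n(b-a)^n$ then gives the key estimate $0\leq R_n(x)\leq\bigl(\tfrac{x-a}{b-a}\bigr)^n R_n(b)$. Since moreover $R_n(b)=f(b)-\sum_{k=0}^{n-1}\tfrac{f^{(k)}(a)}{k!}(b-a)^k\leq f(b)$ (the subtracted terms being nonnegative), we obtain $0\leq R_n(x)\leq\bigl(\tfrac{x-a}{b-a}\bigr)^n f(b)$, which tends to $0$ as $n\to\infty$ for every $x\in[a,b)$. Hence the Taylor series of $f$ about $a$ converges to $f(x)$ for all $x\in[a,b)$.

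It remains to turn this one-sided statement into honest analyticity. The series $g(x)=\sum_{k=0}^{\infty}\tfrac{f^{(k)}(a)}{k!}(x-a)^k$ has nonnegative coefficients and, by the above, converges at points of $[a,b)$ arbitrarily close to $b$; but a power series with nonnegative coefficients that converges at a single point to the right of its center converges absolutely on the symmetric interval about that center, so $g$ has radius of convergence at least $b-a$ and defines a real analytic function on $(2a-b,\,b)$. As $g=f$ throughout $[a,b)$, and in particular on the open interval $(a,b)$, the function $f$ agrees with the real analytic function $g$ on $(a,b)$ and is therefore real analytic there. Since every point of the open interval $I$ belongs to some subinterval $(a,b)$ with $a,b\in I$, it follows that $f$ is real analytic on all of $I$.

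I expect the remainder estimate to be the crux: it is exactly the place where the sign hypotheses enter, and by itself it only yields information about the Taylor series to the right of $a$. The two-sided conclusion is then recovered essentially for free, because the nonnegativity of the Taylor coefficients forces a full interval of convergence as soon as convergence is known at a single point on one side; the reduction from $f^{(q)}$ to $f$ and the passage from local to global analyticity are routine.
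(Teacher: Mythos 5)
Your proof is correct, and its engine is the same as the paper's: the Taylor remainder in integral form, bounded by comparing $f^{(n)}(a+s(x-a))$ with $f^{(n)}(a+s(b-a))$ via the monotonicity of $f^{(n)}$, and the observation that $R_n(b)$ is dominated by $f(b)$ minus nonnegative terms, so that $0\leq R_n(x)\leq\left|\frac{x-a}{b-a}\right|^n\cdot\text{const}$. Where you diverge is in the packaging. The paper treats general $q$ directly by splitting off the first $q$ Taylor terms, whereas you reduce to $q=0$ using the standard fact that antiderivatives of real analytic functions are real analytic; both reductions are legitimate. More substantively, the paper chooses $b$ with $x<b$ and $|x-a|<b-a$, so the same estimate covers points $x$ on \emph{both} sides of $a$ and two-sided convergence of the Taylor series at $a$ comes out in one stroke; you prove only the one-sided statement on $[a,b)$ and then recover analyticity by noting that a power series with nonnegative coefficients converging at points arbitrarily close to $b$ has radius of convergence at least $b-a$, that $f$ agrees with its sum on $(a,b)$, and that every point of the open interval $I$ lies in such an $(a,b)$. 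This is a clean and valid alternative for the theorem as stated. The trade-off is that the paper's two-sided estimate yields, essentially for free, the stronger conclusion recorded in Remark~\ref{rem:entireRe}: when $I$ is right-unbounded, the identity $f(x)=\sum_{k\geq 0}f^{(k)}(a)(x-a)^k/k!$ holds for \emph{all} $x\in I$, including $x<a$. Your one-sided argument gives analyticity to the left of $a$ but not directly that particular global expansion there, so it proves the theorem but would not by itself support the remark.
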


\begin{proof}
For any $a,x\in I$ and any $n\in\N^*{=\, }\N\setminus\{0\}$, with $n\geq q$, the classical Taylor theorem (with the remainder in integral form) states that
\begin{equation}\label{eq:TayEq554}
f(x) - \sum_{k=0}^{q-1}\frac{f^{(k)}(a)}{k!}\, (x-a)^k ~=~ \sum_{k=q}^{n-1}\frac{f^{(k)}(a)}{k!}\, (x-a)^k + R_n(x),
\end{equation}
where
$$
R_n(x) ~=~ \int_a^x\frac{f^{(n)}(t)}{(n-1)!}\, (x-t)^{n-1}\, dt.
$$
Using the change of variable $t=a+s(x-a)$, the remainder term $R_n(x)$ can be equivalently expressed as
$$
R_n(x) ~=~ \frac{(x-a)^n}{(n-1)!}\,\int_0^1 f^{(n)}\big(a+s(x-a)\big)\, (1-s)^{n-1}\, ds.
$$

Now, fix $a\in I$ and let us prove that $f$ is real analytic at $a$. To this extent, we fix $\varepsilon >0$ and $x\in I$ such that $x\in (a-\varepsilon,a+\varepsilon)\subset I$. Let also $b\in I$ with $x<b$ and $|x-a|<b-a$. Since $f^{(n+1)}\geq 0$, it follows that $f^{(n)}$ is increasing on $I$. Therefore, we obtain the estimate
$$
|R_n(x)| ~\leq ~ \frac{|x-a|^n}{(n-1)!}\,\int_0^1 f^{(n)}\big(a+s(b-a)\big)\, (1-s)^{n-1}\, ds,
$$
or equivalently,
$$
|R_n(x)| ~\leq ~ \Big|\frac{x-a}{b-a}\Big|^n\, R_n(b).
$$
Since the identity in \eqref{eq:TayEq554} also holds when $x=b$, we obtain:
$$
f(b)-\sum_{k=0}^{q-1}\frac{f^{(k)}(a)}{k!}\, (b-a)^k-R_n(b) ~ \geq ~ 0,
$$
and hence
$$
0 ~\leq ~ |R_n(x)| ~\leq ~ \Big|\frac{x-a}{b-a}\Big|^n\, \Big(f(b)-\sum_{k=0}^{q-1}\frac{f^{(k)}(a)}{k!}\, (b-a)^k\Big).
$$
This shows that the sequence $n\mapsto R_n(x)$ converges pointwise to zero for any $x\in (a-\varepsilon,a+\varepsilon)$, and hence the function $f$ is real analytic at $a$. Since $a$ was chosen arbitrary in $I$, we conclude that $f$ is real analytic on the entire interval $I$.
\end{proof}

\begin{remark}\label{rem:entireRe}
We note that the proof of Theorem~\ref{thm:21BerLiTh} actually yields a stronger conclusion than initially stated: \emph{Under the given assumptions, and further assuming that the interval $I$ is right-unbounded, one obtains the identity}
$$
f(x) ~=~ \sum_{k=0}^{\infty}\frac{f^{(k)}(a)}{k!}\, (x-a)^k\qquad (x\in I)
$$
\emph{for every $a\in I$}. Indeed, in this case, for any $a,x\in I$, it is always possible to choose the point $b\in I$ such that $x<b$ and $|x-a|<b-a$. This representation, in which $x$ is independent of the choice of $a$, provides a stronger statement than the mere real analyticity of $f$ on $I$. Furthermore, since the Taylor series converges on a right-unbounded interval, it must converge on all of $\R$, and thus defines an analytic extension of $f$ to the whole real line. In particular, $f$ is the restriction of a real-analytic function on $\R$, and even of an \emph{entire function} on $\C$. This striking, albeit classical, fact is mentioned, for instance, in Boas \cite[Section 1]{Boa71}.
\end{remark}

Recall that a real-valued function defined on an open interval $I$ is called \emph{regularly monotone} if it is infinitely differentiable and each of its derivatives, including the function itself, preserves a constant sign throughout $I$, regardless of the distribution of these signs.

Bernstein \cite[pp.\ 196--197]{Ber26}, using a somewhat more intricate proof, showed that real analyticity still holds even when the condition of absolute or complete monotonicity is weakened to regular monotonicity. A detailed discussion of this result is provided in Boas \cite[Section 2]{Boa71}, while a shorter proof based on the even-odd decomposition of functions appears in McHugh \cite{McH75}.

\section{Newton Series Representation of Completely Monotone Functions}

We say that a function $f\colon\, I\to\R$, defined on a right-unbounded open interval $I$, \emph{admits a Newton series expansion on $I$ at a point $a\in I$} if the following identity holds:
\begin{equation}\label{eq:NewExp54}
f(x) ~=~ \sum_{k=0}^{\infty}\tchoose{x-a}{k}\,\Delta^k f(a)\qquad (x\in I).
\end{equation}

When $x-a$ is a nonnegative integer, the identity in \eqref{eq:NewExp54} always holds, and the series clearly reduces to a finite sum. However, as discussed in Graham \emph{et al.}\ \cite[p.~191]{GraKnuPat94}, when $x-a$ is not a nonnegative integer, the identity may fail for an arbitrary function $f\colon\, I\to\R$, even if $f$ is real analytic. For instance, when the right-hand series converges for a specific choice of $a$, it may converge to a value different from $f(x)$. To illustrate this, consider the real-analytic function
$$
f(x) ~=~ \sin(\pi x)\qquad (x>0),
$$
and take $a=1$. In this case, we can readily see that $\Delta^n f(1)=0$ for every $n\in\N$. As a result, the Newton series in \eqref{eq:NewExp54} is identically zero, which clearly does not coincide with $f(x)$ for $x>0$. This example demonstrates that admitting a Newton series expansion is a relatively strong and subtle requirement. For a comprehensive treatment of Newton series expansions, including their extension to the complex plane, the reader may consult, for instance, Gel'fond~\cite{Gel71}.

In this section, we present and prove our main result, stated in the following theorem, which can be interpreted as a Newton-series analog of Bernstein's little theorem on a right-unbounded open interval (see Remark~\ref{rem:entireRe}).

\begin{theorem}\label{thm:MainNew35}
Let $I$ be a real right-unbounded open interval. Suppose that a function $f\colon\, I\to\R$ is infinitely differentiable and that $f^{(q)}$ is completely monotone for some $q\in\N$. Then, for any $a\in I$, the function $f$ admits the Newton series expansion:
$$
f(x) ~=~ \sum_{k=0}^{\infty}\tchoose{x-a}{k}\,\Delta^kf(a)\qquad (x\in I)
$$
and the convergence of the series is uniform on compact subsets of $I$.
\end{theorem}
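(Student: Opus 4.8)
The plan is to mimic the Taylor-remainder argument of Theorem~\ref{thm:21BerLiTh}, but with the \emph{Newton forward-difference} interpolation formula in place of Taylor's formula. The key classical tool is the Newton interpolation identity with integral remainder: for $x\in I$ and $n\geq q$,
\[
f(x) ~=~ \sum_{k=0}^{n-1}\tchoose{x-a}{k}\,\Delta^k f(a) ~+~ \widetilde R_n(x),
\]
where the remainder $\widetilde R_n(x)$ admits an integral representation obtained by iterating the fundamental theorem of calculus over the nodes $a,a+1,\dots,a+n-1$; concretely,
\[
\widetilde R_n(x) ~=~ \int_{[0,1]^n} \Big(\prod_{j=0}^{n-1}(x-a-s_1-\cdots-s_{\widehat{\,}}-j\cdots)\Big)\, f^{(n)}\big(a+s_1+\cdots+s_n\big)\,ds_1\cdots ds_n,
\]
or, more usefully, the \emph{Peano-kernel form} $\widetilde R_n(x)=\int_a^{\,} K_n(x,t)\,f^{(n)}(t)\,dt$ where $K_n(x,\cdot)$ is the $n$th divided-difference kernel of the nodes $a,\dots,a+n-1$ against the point $x$. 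What matters is only two features of this kernel that I will extract: first, for $x$ in a bounded region the kernel has an explicit sign pattern and an $L^1$-norm that can be controlled; second — and this is the crucial structural point — evaluating the identity at $x=b$ for a suitable $b\in I$ with $b-a\in\N$ makes the \emph{entire} Newton series terminate, so $\widetilde R_n(b)=0$ for all large $n$, while for general $x$ one has a pointwise domination $|\widetilde R_n(x)|\le C_n(x)\,\widetilde R_n(b')$ with $C_n(x)\to 0$.

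In more detail, the steps are as follows. First I reduce to $q=0$: if $f^{(q)}$ is completely monotone, I would argue that the Newton series of $f$ at $a$ can be assembled from the Newton series of $f^{(q)}$ by $q$-fold indefinite summation, using that $\Delta$ commutes with the relevant operations and that the first $q$ difference-coefficients $\Delta^k f(a)$, $0\le k<q$, reproduce the polynomial part exactly — this is the direct analog of the split in \eqref{eq:TayEq554}. Second, with $f=f^{(0)}$ completely monotone, I fix $a\in I$ and a compact $K\subset I$; I choose an integer $N$ large enough that $a+N\in I$ and $a+N>\sup K$, and I set $b=a+N$. Because $b-a=N\in\N$, the Newton expansion at $a$ evaluated at $b$ is a finite sum of exactly $N+1$ terms, hence $\widetilde R_n(b)=0$ for all $n>N$; this is the exact counterpart of the step in Theorem~\ref{thm:21BerLiTh} where $f(b)-\sum_{k<q}\cdots-R_n(b)\ge 0$, but here the inequality becomes the equality $\widetilde R_n(b)=0$, which is even stronger and is what ultimately forces summability of the series rather than merely local Taylor convergence. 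Third, for $x\in K$ I estimate $|\widetilde R_n(x)|$ by exploiting the sign-regularity of $f^{(n)}$: since $(-1)^n f^{(n)}\ge 0$, the sign of $f^{(n)}$ is constant, so $|\widetilde R_n(x)|=\big|\int K_n(x,t)\,f^{(n)}(t)\,dt\big|$ can be bounded, after checking the sign of $K_n(x,t)$ on $[a,b]$, by a constant (depending only on the combinatorial factor $\big|\binom{x-a}{n}\big|$ relative to $\binom{N}{n}$, times a comparison of the kernels) multiplied by the integral against $f^{(n)}$ over the \emph{larger} configuration ending at $b$ — that is, by $\big|\binom{x-a}{n}/\binom{N}{n}\big|$ times (a bounded multiple of) the absolute remainder at $b$, which vanishes. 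The ratio $\big|\binom{x-a}{n}\big|/\binom{N}{n}$ is, for fixed $x\in K$ and $n>N$, a product of $n$ factors whose size is governed by $\prod_{j}|x-a-j|/|N-j|$; one checks this tends to $0$, and in fact tends to $0$ uniformly for $x$ in the compact set $K$. This gives $\widetilde R_n(x)\to 0$ uniformly on $K$, which is exactly the assertion.

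The main obstacle — the step I expect to require real care — is the kernel estimate in the third step: unlike the Taylor kernel $(1-s)^{n-1}(x-a)^n/(n-1)!$, which is manifestly of one sign and trivially monotone in the relevant parameter, the Newton/divided-difference kernel $K_n(x,t)$ for nodes $a,a+1,\dots,a+n-1$ is more delicate, and I must verify (i) that it does not change sign for $t\in[a,b]$ once $x$ and the nodes are arranged appropriately (this is where right-unboundedness is used, to guarantee all nodes and the comparison point $b$ lie in $I$ on the same side of $K$), and (ii) that the comparison $|K_n(x,t)|\le c\,\big|\binom{x-a}{n}/\binom{N}{n}\big|\,|K_n(b,t)|$ holds with a constant $c$ that is harmless. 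A clean way to sidestep the explicit kernel is to instead run the argument directly on the difference operator: write $\widetilde R_n(x)=\sum_{k=n}^{\infty}$-type control via the contour/integral representation of $\binom{x-a}{k}$, or — cleaner still — apply Theorem~\ref{thm:21BerLiTh} and Remark~\ref{rem:entireRe} first to get that $f$ extends to an entire function, then estimate the Newton remainder of an entire function of controlled growth using the classical Nörlund–Rice integral; but the self-contained route through the Peano kernel, parallel to Krantz--Parks, is the one I would write up, and the sign/size analysis of that kernel is the crux.
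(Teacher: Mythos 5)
There is a genuine gap, and it is fatal as written: your choice of comparison point $b=a+N$ to the \emph{right} of the compact set makes the whole domination step vacuous. You correctly observe that the remainder at $b$ vanishes for $n>N$ (since $(b-a)^{\underline{n}}=N^{\underline{n}}=0$), but then the quantity you want to use as a comparison scale, $\bigl|\tchoose{x-a}{n}/\tchoose{N}{n}\bigr|$, is undefined for $n>N$ because $\tchoose{N}{n}=0$; your claim that this ratio tends to $0$ cannot be made sense of, and the product $\prod_j|x-a-j|/|N-j|$ contains a zero factor in the denominator. Worse, if any bound of the form $|\widetilde R_n(x)|\le C_n(x)\,|\widetilde R_n(b)|$ with finite $C_n(x)$ did hold, it would force $\widetilde R_n(x)=0$ for all $n>N$, i.e.\ $f$ would coincide with a polynomial of degree at most $N$ on $K$ --- false already for $f(x)=1/x$. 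So the kernel comparison (your point (ii)) cannot hold with $b$ on that side; the sign/monotonicity of the divided difference in its last argument also runs the wrong way there (for a completely monotone $f$ the map $x\mapsto f[a,a+1,\ldots,a+n-1,x]$ is positive and decreasing, so evaluating at a point to the right of $x$ gives a \emph{lower} bound, not an upper bound). The sketched fallback via an entire extension and N\"orlund--Rice is also not enough as stated, since complete monotonicity gives no exponential-type bound of the kind such Newton-series convergence criteria require.

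The paper's proof repairs exactly this point by flipping the side of the comparison: it takes $b\in I$ with $b<\min\{a,x\}$. Then $(b-a)^{\underline{n}}$ never vanishes; by higher-order convexity (Proposition~\ref{prop:Kuczma8}(b)--(c)) the map $x\mapsto f[a,a+1,\ldots,a+n-1,x]$ is one-signed and monotone in the right direction, giving $|R_n(x)|\le\bigl|(x-a)^{\underline{n}}/(b-a)^{\underline{n}}\bigr|\,|R_n(b)|$ with $R_n(x)=(x-a)^{\underline{n}}f[a,\ldots,a+n-1,x]$ (the divided-difference remainder of Proposition~\ref{prop:NewAn56Tay}, no Peano kernel needed); because $b-a<0$, all the terms $\tchoose{b-a}{k}\Delta^kf(a)$ with $k\ge q$ and the remainder $R_n(b)$ share the sign $(-1)^q$, so $|R_n(b)|\le\bigl|f(b)-\sum_{k=0}^{q-1}\tchoose{b-a}{k}\Delta^kf(a)\bigr|$ uniformly in $n$ --- this replaces your ``remainder at $b$ vanishes'' step and is the genuine analog of the positivity step in Theorem~\ref{thm:21BerLiTh}. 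Finally, Lemma~\ref{lemma:546-Levrie} (binomial asymptotics) shows $\bigl|(x-a)^{\underline{n}}/(b-a)^{\underline{n}}\bigr|\to 0$ uniformly on compact subsets of $(b,\infty)$, which yields the uniform convergence on compacts of $I$. Your overall architecture (Newton analog of the Krantz--Parks argument, a one-signed remainder comparison at an auxiliary point, a combinatorial ratio tending to zero) matches the paper's, but the side on which $b$ is placed, and the resulting boundedness-versus-vanishing of the remainder there, is precisely the crux, and your version of it does not work.
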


To prove this theorem, we begin by introducing some preliminary definitions and results. The following proposition offers the `Newtonian analog' of Taylor's theorem, with the remainder expressed in terms of a divided difference. While this result is considered folklore in Newton interpolation theory, we include a proof for the sake of clarity and completeness.

For any function $f\colon\, I\to\R$ and any pairwise distinct points $x_0,x_1,\ldots,x_n\in I$, the expression
$$
f[x_0,x_1,\ldots,x_n]
$$
denotes the \emph{divided difference} of $f$ at the points $x_0,x_1,\ldots,x_n$; that is, the coefficient of $x^n$ in the interpolating polynomial of $f$ at the nodes $x_0,x_1,\ldots,x_n$. This concept extends naturally to the case where some of the nodes coincide. Specifically, if a point $x_i$ appears $m$ times among the interpolation nodes, the corresponding divided difference still exists, provided that the derivative $f^{(m-1)}(x_i)$ exists.

\begin{proposition}\label{prop:NewAn56Tay}
Let $f\colon\, I\to\R$ be a function defined on a real right-unbounded open interval. Let also $a,x\in I$, and let $n\in\N$. If $x\in\{a,a+1,\ldots,a+n-1\}$, then
$$
f(x)-\sum_{k=0}^{n-1}\tchoose{x-a}{k}\,\Delta^kf(a) ~=~ 0.
$$
If $x\notin\{a,a+1,\ldots,a+n-1\}$ or if $f$ is differentiable at $x$, then
$$
f(x)-\sum_{k=0}^{n-1}\tchoose{x-a}{k}\,\Delta^kf(a) ~=~ (x-a)^{\underline{n}}\, f[a,a+1,\ldots,a+n-1,x].
$$
\end{proposition}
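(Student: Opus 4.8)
The plan is to recognize the partial sum $\sum_{k=0}^{n-1}\binom{x-a}{k}\,\Delta^kf(a)$ as the polynomial interpolating $f$ at the $n$ equally spaced nodes $a,a+1,\ldots,a+n-1$, and then to read off the first identity directly from the interpolation conditions and the second one from the classical description of the interpolation error by a divided difference. Note first that $a+j\in I$ for every $j\in\N$, since $I$ is right-unbounded, so that all the quantities $f(a+j)$ and $\Delta^kf(a)$ appearing below are well defined.

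First I would set $P_n(t)=\sum_{k=0}^{n-1}\binom{t-a}{k}\,\Delta^kf(a)$, a polynomial in $t$ of degree at most $n-1$ (with $P_0\equiv 0$), so that the left-hand side of both displayed identities equals $f(x)-P_n(x)$. The key preliminary step is to check that $P_n$ interpolates $f$ at $a,a+1,\ldots,a+n-1$, that is, $P_n(a+j)=f(a+j)$ for $j=0,1,\ldots,n-1$. Since $\binom{j}{k}=0$ whenever $k>j$, we have $P_n(a+j)=\sum_{k=0}^{j}\binom{j}{k}\Delta^kf(a)$, and the right-hand side equals $f(a+j)$ by the classical finite Newton formula $f(a+j)=\sum_{k=0}^{j}\binom{j}{k}\Delta^kf(a)$; this in turn follows by writing the shift $f(\cdot)\mapsto f(\cdot+1)$ as $\id+\Delta$ and applying the binomial theorem, or by a one-line induction on $j$. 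In particular, if $x\in\{a,a+1,\ldots,a+n-1\}$ then $f(x)-P_n(x)=0$, which is the first assertion of the proposition.

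For the second assertion, suppose first that $x\notin\{a,a+1,\ldots,a+n-1\}$. By uniqueness of polynomial interpolation at distinct nodes, $P_n$ is \emph{the} interpolating polynomial of $f$ at $a,\ldots,a+n-1$. Let $Q$ be the interpolating polynomial of $f$ at the $n+1$ distinct nodes $a,a+1,\ldots,a+n-1,x$; it has degree at most $n$, and $Q$ agrees with $f$, hence with $P_n$, at each of the $n$ points $a,\ldots,a+n-1$. Therefore $Q-P_n$ is a polynomial of degree at most $n$ with the $n$ distinct roots $a,\ldots,a+n-1$, so $Q(t)-P_n(t)=c\,(t-a)^{\underline{n}}$ for some constant $c$, because $(t-a)^{\underline{n}}=\prod_{j=0}^{n-1}(t-a-j)$. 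Comparing the coefficients of $t^n$ shows that $c$ is the coefficient of $t^n$ in $Q$, which is by definition the divided difference $f[a,a+1,\ldots,a+n-1,x]$. Evaluating the identity at $t=x$ and using $Q(x)=f(x)$ yields $f(x)-P_n(x)=(x-a)^{\underline{n}}\,f[a,a+1,\ldots,a+n-1,x]$, as required. In the remaining case, where $x=a+i$ for some $i\in\{0,\ldots,n-1\}$ and $f$ is differentiable at $x$, the divided difference $f[a,\ldots,a+n-1,x]$ with the repeated node $a+i$ is well defined by that differentiability, while $(x-a)^{\underline{n}}$ contains the factor $x-a-i=0$; hence the right-hand side vanishes, and so does the left-hand side by the first assertion, so the identity holds trivially.

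The argument is essentially bookkeeping and presents no serious obstacle; the only point requiring care is the edge case in which $x$ coincides with one of the nodes, where the divided difference on the right-hand side involves a repeated node. This is exactly why the hypothesis allows the alternative ``$f$ is differentiable at $x$'': in that case only the \emph{existence} of $f[a,\ldots,a+n-1,x]$ matters, since the prefactor $(x-a)^{\underline{n}}$ vanishes, and both sides reduce to $0$. One should also keep track of the degenerate case $n=0$, in which $P_0\equiv 0$, $(x-a)^{\underline{0}}=1$, $f[x]=f(x)$, and all statements collapse to the tautology $f(x)=f(x)$.
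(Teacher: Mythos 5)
Your proof is correct, but it follows a genuinely different route from the paper's. The paper proves the second identity by induction on $n$: it applies the recursive formula $f[a,\ldots,a+n,x]$ in terms of $f[a,\ldots,a+n-1,x]$ and $f[a,\ldots,a+n]$, multiplies through by $(x-a)^{\underline{n+1}}$, and uses the identity $f[a,a+1,\ldots,a+n]=\Delta^n f(a)/n!$ to absorb the new term $\tchoose{x-a}{n}\Delta^n f(a)$ into the partial sum. You instead argue non-inductively, in the classical interpolation-error style: you verify via $(\id+\Delta)^j$ that $P_n(t)=\sum_{k=0}^{n-1}\tchoose{t-a}{k}\Delta^k f(a)$ interpolates $f$ at the unit-spaced nodes, then compare $P_n$ with the interpolant $Q$ at the $n+1$ nodes $a,\ldots,a+n-1,x$, factor $Q-P_n=c\,(t-a)^{\underline{n}}$ by the root/degree count, and identify $c$ with the leading coefficient of $Q$, which is exactly the paper's definition of $f[a,\ldots,a+n-1,x]$; evaluating at $t=x$ gives the remainder formula. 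Your approach trades the paper's divided-difference recursion and induction for uniqueness of polynomial interpolation plus leading-coefficient identification, which arguably makes the origin of the factor $(x-a)^{\underline{n}}$ more transparent; the paper's induction is more mechanical but needs only the recursion and the finite-difference expression for divided differences at unit-spaced nodes. Your treatment of the degenerate cases ($n=0$, and $x$ coinciding with a node, where differentiability guarantees existence of the divided difference while both sides vanish) matches the paper's handling and is complete.
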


\begin{proof}
If $x\in\{a,a+1,\ldots,a+n-1\}$, then it is clear that the first identity holds. The second identity also holds in this case provided that $f$ is differentiable at $x$, since both sides then vanish and therefore coincide. Thus, we may now assume that $x\notin\{a,a+1,\ldots,a+n-1\}$.

Let us prove the identity by induction on $n$. It holds trivially for $n=0$ since in that case $f(x)=f[x]$. Assume it holds for some $n\in\N$, and let us show that it remains true for $n+1$. Using the classical recursive formula for divided differences, we obtain
\begin{eqnarray*}
\lefteqn{(x-a)^{\underline{n+1}}\, f[a,a+1,\ldots,a+n,x]}\\
&=& (x-a)^{\underline{n}}\, f[a,a+1,\ldots,a+n-1,x]-(x-a)^{\underline{n}}\, f[a,a+1,\ldots,a+n].
\end{eqnarray*}
Now, applying the induction hypothesis to the first term and using the well-known expression for the second term in terms of finite differences, the right-hand side simplifies to:
$$
f(x)-\sum_{k=0}^{n-1}\tchoose{x-a}{k}\,\Delta^kf(a)-\tchoose{x-a}{n}\,\Delta^nf(a) ~=~  f(x)-\sum_{k=0}^n\tchoose{x-a}{k}\,\Delta^kf(a).
$$
This completes the inductive step and proves the proposition.
\end{proof}

We now recall the definitions of functions that are convex and concave of order~$p$, for any integer $p\geq -1$. These definitions are based on divided differences with $p + 2$ arguments. While some authors use $p + 1$ arguments instead, this notational variation is merely a matter of convention and does not imply a difference in meaning. For background, see, for instance \cite[Section 2.2]{MarZen22}.

\begin{definition}\label{de:B22-pconvconc52}
Let $I$ be any real interval and let $p\geq -1$ be an integer. A function $f\colon\, I\to\R$ is said to be \emph{$p$-convex} (resp.\ \emph{$p$-concave}) if for any system $x_0<x_1<\cdots < x_{p+1}$ of $p+2$ points in $I$ the following inequality holds:
$$
f[x_0,x_1,\ldots,x_{p+1}]~\geq ~0\qquad (\text{resp.}~f[x_0,x_1,\ldots,x_{p+1}]~\leq ~0).
$$
We denote by $\cK^p_1(I)$ (resp.\ $\cK^p_{-1}(I)$) the set of functions $f\colon\, I\to\R$ that are $p$-convex (resp.\ $p$-concave), and we introduce the notation
$$
\cK^p(I) ~=~ \cK^p_1(I)\cup\cK^p_{-1}(I).
$$
\end{definition}

Thus defined, a function $f\colon\, I\to\R$ is $1$-convex if it is convex in the usual sense, $0$-convex if it is increasing, and $({-1})$-convex if it is nonnegative.

A fundamental property worth noting is that if $I$ is an open real interval, then the following inclusion holds:
\begin{equation}\label{eq:Incl32p}
\cK^{p+1}(I) ~\subset ~\cC^p(I)\qquad (p\in\N).
\end{equation}
This result, along with other useful properties stated in the following proposition, can be found in \cite[Section 2.2]{MarZen22} and the references therein.
 
\begin{proposition}\label{prop:Kuczma8}
Let $I$ be an open real interval and let $p\in\N$. Then the following assertions hold.
\begin{enumerate}
\item[(a)] If $f$ lies in $\cK^p_1(I)$, where $I$ is right-unbounded, then $\Delta f$ lies in $\cK^{p-1}_1(I)$.
\item[(b)] If $f\colon\, I\to\R$ is differentiable, then $f$ lies in $\cK^p_1(I)$ if and only if $f'$ lies in $\cK^{p-1}_1(I)$
\item[(c)] If $f$ lies in $\cC^p(I)\cap\cK^p_1(I)$, then the map
$$
(z_0,z_1,\ldots,z_p) ~\mapsto ~ f[z_0,z_1\ldots,z_p]
$$
from $I^{p+1}$ to $\R$ is continuous and increasing in each place.
\end{enumerate}
\end{proposition}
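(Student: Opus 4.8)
My plan rests on a single algebraic engine, the one-node-change identity $f[S,u]-f[S,v]=(u-v)\,f[S,u,v]$, valid for any list $S$ of common nodes and any $u\neq v$, together with the linearity and shift-invariance of divided differences. For part~(a), I would use $\Delta f(x)=f(x+1)-f(x)$ and linearity to write, for distinct $x_0<\cdots<x_p$ in $I$, the equality $(\Delta f)[x_0,\ldots,x_p]=f[x_0+1,\ldots,x_p+1]-f[x_0,\ldots,x_p]$, the shifted arguments lying in $I$ precisely because $I$ is right-unbounded. I would then telescope this difference by converting the arguments from $x_j+1$ to $x_j$ one index at a time; each conversion contributes, by the one-node-change identity, a term $f[S_j,x_j,x_j+1]$ built on the $p+2$ nodes $x_0,\ldots,x_{j-1},x_{j+1}+1,\ldots,x_p+1,x_j,x_j+1$. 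A short inspection using $x_0<\cdots<x_p$ shows these $p+2$ nodes are pairwise distinct, so each term is nonnegative by the $p$-convexity of $f$, and therefore so is the sum $(\Delta f)[x_0,\ldots,x_p]$; this is the $(p-1)$-convexity of $\Delta f$.

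For part~(b) I would treat the two implications separately. For the forward one, I would start from the difference-quotient representation $f'[y_0,\ldots,y_p]=\lim_{h\to0^+}\tfrac1h\big(f[y_0+h,\ldots,y_p+h]-f[y_0,\ldots,y_p]\big)$, which is immediate upon differentiating the explicit linear combination $\sum_i f(y_i)\big/\prod_{k\neq i}(y_i-y_k)$ term by term. Telescoping the numerator exactly as in part~(a) expresses the quotient as a sum of $(p+2)$-node divided differences of $f$, each nonnegative once $h>0$ is small enough to keep all nodes distinct; letting $h\to0^+$ yields $f'[y_0,\ldots,y_p]\geq0$. For the converse I would use an interpolation identity: given $x_0<\cdots<x_{p+1}$, let $P$ be the degree-$(p+1)$ interpolant of $f$ at these nodes, so that its leading coefficient equals $f[x_0,\ldots,x_{p+1}]$. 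Applying Rolle's theorem to $f-P$ produces nodes $t_j\in(x_j,x_{j+1})$, $j=0,\ldots,p$, at which $f'-P'$ vanishes; since $P'$ has degree $\leq p$ and agrees with $f'$ at $t_0,\ldots,t_p$, it is the interpolant of $f'$ at those nodes, whence comparing leading coefficients gives the exact identity $f[x_0,\ldots,x_{p+1}]=\tfrac1{p+1}\,f'[t_0,\ldots,t_p]$. The $(p-1)$-convexity of $f'$ then forces the left-hand side to be nonnegative.

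For part~(c), continuity of $(z_0,\ldots,z_p)\mapsto f[z_0,\ldots,z_p]$ on $I^{p+1}$ would follow from the Hermite--Genocchi integral representation, whose integrand involves only the continuous function $f^{(p)}$ under the hypothesis $f\in\cC^p(I)$. For monotonicity, by symmetry it is enough to consider the first argument; fixing $z_1,\ldots,z_p$ and writing $\phi(z)=f[z,z_1,\ldots,z_p]$, the one-node-change identity gives $\phi(z')-\phi(z)=(z'-z)\,f[z,z',z_1,\ldots,z_p]$, which is nonnegative whenever $z,z',z_1,\ldots,z_p$ are pairwise distinct, by $p$-convexity. Here lies the main obstacle: $f\in\cC^p$ guarantees continuity of the order-$p$ divided difference $\phi$ but not of the order-$(p+1)$ difference on the right, so I would not assert nonnegativity directly at coincident configurations. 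Instead I would first conclude that the continuous function $\phi$ is nondecreasing on the complement of the finite set $\{z_1,\ldots,z_p\}$ and hence, by continuity, on all of $I$; I would then remove any repetitions among $z_1,\ldots,z_p$ by perturbing them to distinct values, using the established continuity of $\phi$ in all its nodes and the fact that a pointwise limit of nondecreasing functions is nondecreasing.
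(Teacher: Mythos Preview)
The paper does not supply its own proof of this proposition; it merely records the three assertions and refers the reader to \cite[Section~2.2]{MarZen22} and the literature cited there. There is therefore no in-paper argument to compare yours against.

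Your proposal is correct and self-contained. The telescoping device based on the one-node-change identity $f[S,u]-f[S,v]=(u-v)\,f[S,u,v]$ cleanly handles both part~(a) and the forward direction of part~(b); the distinctness checks you sketch do go through (for instance, in part~(a) one has $x_i<x_j<x_j+1<x_{j+1}+1\leq x_k+1$ for $i<j<k$, so no collision occurs among the $p+2$ telescoped nodes). The Rolle-based mean-value identity $f[x_0,\ldots,x_{p+1}]=\tfrac{1}{p+1}\,f'[t_0,\ldots,t_p]$ is exactly the classical argument for the converse in~(b). In part~(c) you correctly separate the two regularity levels: Hermite--Genocchi gives continuity of the order-$p$ divided difference under $f\in\cC^p(I)$, while the order-$(p+1)$ difference appearing in the monotonicity step need not be continuous; your workaround---establishing monotonicity away from the finite coincidence set, extending by continuity of $\phi$, and then handling repeated $z_i$ by perturbation and passing to the limit---is sound. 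In short, your argument supplies precisely the details the paper outsources to its reference.
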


According to Definition~\ref{de:B22-pconvconc52}, it is clear that a function $f\colon\, I\to\R$, defined on an open interval $I$, is completely monotone if and only if
$$
f ~\in ~ \cC^{\infty}(I)\quad\text{and}\quad (-1)^n f^{(n)} ~\in ~\cK^{-1}_{1}(I) \quad\text{for all $n\in\N$}.
$$
By applying Eq.~\eqref{eq:Incl32p} and Proposition \ref{prop:Kuczma8}(b), we see that this latter condition exactly means that
$$
f ~\in ~\cK^{n-1}_{(-1)^n}(I) \quad\text{for all $n\in\N$}.
$$
Similarly, the function $f\colon\, I\to\R$ is absolutely monotone if and only if
$$
f ~\in ~ \cK^{n-1}_1(I) \quad\text{for all $n\in\N$},
$$
and it is regularly monotone if and only if
$$
f ~\in ~ \cK^{n-1}(I) \quad\text{for all $n\in\N$}.
$$
Interestingly, these observations allow us to succinctly reformulate the properties of complete, absolute, and regular monotonicity in terms of higher-order convexity, without requiring the assumption of infinite differentiability.

Before proving Theorem~\ref{thm:MainNew35}, we need to present the following additional preliminary and technical lemma.

\begin{lemma}\label{lemma:546-Levrie}
Let $a,b\in\R$, with $a>b$. Then, the sequence of functions
$$
n ~\mapsto ~ \bigg|\frac{(x-a)^{\underline{n}}}{(b-a)^{\underline{n}}}\bigg|
$$
converges uniformly to zero on compact subsets of $(b,\infty)$.
\end{lemma}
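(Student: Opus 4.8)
The plan is to reduce the statement to the divergence of the harmonic series. Writing $c=a-b>0$, we have $b-a-j=-(c+j)\neq 0$ for every $j\in\N$, so the ratio is well defined and
$$
\bigg|\frac{(x-a)^{\underline{n}}}{(b-a)^{\underline{n}}}\bigg| ~=~ \prod_{j=0}^{n-1}\frac{|a+j-x|}{c+j}.
$$
Fix a compact set $K\subset(b,\infty)$ and choose $\delta>0$ and $M>0$ with $K\subset[b+\delta,M]$. I would then pick an index $j_0$, depending only on $K$, large enough that $a+j-x>0$ and $\frac{x-b}{c+j}<1$ for all $j\geq j_0$ and all $x\in K$ (for instance any integer $j_0>M-a$ works, since then $c+j\geq c+j_0>M-b\geq x-b$).

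For $j\geq j_0$ and $x\in K$, using $x-b=(x-a)+c$ one has $\frac{a+j-x}{c+j}=1-\frac{x-b}{c+j}$, and the elementary inequality $\log(1-t)\leq-t$ on $[0,1)$ gives $\log\frac{a+j-x}{c+j}\leq-\frac{x-b}{c+j}\leq-\frac{\delta}{c+j}$, a bound uniform in $x\in K$. Meanwhile the finite ``head'' $\prod_{j=0}^{j_0-1}\frac{|a+j-x|}{c+j}$ is a continuous function of $x$ on the compact $K$, hence bounded there by some constant $C_K$. Splitting the product at $j_0$ and combining the two estimates, we obtain, for every $x\in K$ and every $n>j_0$,
$$
\bigg|\frac{(x-a)^{\underline{n}}}{(b-a)^{\underline{n}}}\bigg| ~\leq~ C_K\,\exp\!\bigg(-\delta\sum_{j=j_0}^{n-1}\frac{1}{c+j}\bigg).
$$
The right-hand side does not depend on $x$ and tends to $0$ as $n\to\infty$, since $\sum_{j}\frac{1}{c+j}=+\infty$; this is exactly uniform convergence to zero on $K$.

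The only subtlety is that two genuinely different arguments are needed for the two ranges of $j$: for large $j$ the factors are close to $1$ and a logarithmic/harmonic-series estimate applies uniformly, whereas for the finitely many small $j$ the numerator $a+j-x$ may vanish or be negative, so one falls back on continuity and compactness of $K$ to absorb these factors into a constant. Making all bounds simultaneously uniform over $K$—by fixing $\delta$, $M$, and $j_0$ depending only on $K$—is the point that requires a little care, but presents no real obstacle once $K$ is known to be compact.
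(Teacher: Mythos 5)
Your proof is correct, and it takes a genuinely different route from the paper. You write the quantity as the product $\prod_{j=0}^{n-1}|a+j-x|/(c+j)$ with $c=a-b$, split off a finite head at an index $j_0$ depending only on $K$ (which simultaneously disposes of the finitely many factors where $a+j-x$ may vanish or be negative and of the uniformity issue), and then estimate the tail via $\log(1-t)\leq -t$ and the divergence of $\sum_j 1/(c+j)$; this is elementary and self-contained, and it even yields an explicit rate of the form $C_K\, n^{-\delta}$ with $\delta=\mathrm{dist}(K,b)$, since $\sum_{j=j_0}^{n-1}1/(c+j)\sim\log n$. The paper instead writes the ratio, after splitting off the factor $(x-a)^{\underline{k}}$ with $k=\lceil x-a\rceil$ to handle the sign changes in the numerator, as a quotient of binomial coefficients $\tchoose{c-a-k}{n-k}/\tchoose{b-a-k}{n-k}$ and invokes Levrie's asymptotic equivalence $\tchoose{z}{m}\sim(-1)^m/\big(m^{z+1}\Gamma(-z)\big)$, obtaining the comparable decay $\Gamma(k+a-b)/\Gamma(k+a-c)\cdot (n-k)^{-(c-b)}$; uniformity is then secured by taking a maximum over the finitely many possible values of $k$ on a compact set. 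So both arguments deliver essentially the same polynomial decay; yours avoids the external asymptotic formula and the Gamma function at the cost of a slightly less explicit constant, while the paper's is shorter once Levrie's result is granted.
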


\begin{proof}
Let $a,b,c,d\in\R$, with $a>b$ and $b<c<d$, and let $x\in [c,d]$. Define also $k=\lceil x-a\rceil$, if $x-a>0$, and $k=0$, if $x-a\leq 0$. Thus, if $x-a>0$, the number $k$ is the unique nonnegative integer such that
$$
x-a-k ~\leq ~ 0 ~< ~ x-a-k+1.
$$

For large $n$, we then obtain
$$
\bigg|\frac{(x-a)^{\underline{n}}}{(b-a)^{\underline{n}}}\bigg| ~=~ \frac{(x-a)^{\underline{k}}~\big|(x-a-k)^{\underline{n-k}}\big|}
{\big|(b-a)^{\underline{n}}\big|} ~\leq ~
\frac{(d-a)^{\underline{k}}~\big|(c-a-k)^{\underline{n-k}}\big|}
{\big|(b-a)^{\underline{k}}\big|\,\big|(b-a-k)^{\underline{n-k}}\big|}\, .
$$

Since $0\leq k<\lceil d-b\rceil$, there exists a constant $C>0$ such that
$$
\sup_{x\in [c,d]}~\bigg|\frac{(x-a)^{\underline{n}}}{(b-a)^{\underline{n}}}\bigg| ~\leq ~
C\,\max_{0\,\leq\, k<\lceil d-b\rceil}\,\bigg|\frac{(c-a-k)^{\underline{n-k}}}
{(b-a-k)^{\underline{n-k}}}\bigg|\, .
$$

Now, using the following classical asymptotic equivalence (see, e.g., Levrie \cite{Lev17}):
$$
\tchoose{z}{m} ~\sim ~ \frac{(-1)^m}{m^{z+1}\,\Gamma(-z)}\qquad\text{as $m\to\infty$, $m\in\N^*$},
$$
which holds for any $z\in\C\setminus\N$, we can derive the following one, valid for any integer $k$ such that $0\leq k<\lceil d-b\rceil$:
$$
\frac{(c-a-k)^{\underline{n-k}}}{(b-a-k)^{\underline{n-k}}} ~=~
\frac{\tchoose{c-a-k}{n-k}}{\tchoose{b-a-k}{n-k}} ~\sim ~ \frac{\Gamma(k+a-b)}{\Gamma(k+a-c)}\,\frac{1}{(n-k)^{c-b}}\qquad\text{as $n\to\infty$},
$$
where we may assume that $c-a-k$ is not a nonnegative integer (otherwise, the left-hand expression is identically zero). Since $c-b>0$, the right-hand expression converges to zero as $n\to\infty$. This completes the proof of the lemma.
\end{proof}

We are now prepared to prove Theorem~\ref{thm:MainNew35}. As noted in the previous section, the proof closely mirrors the structure and techniques used in the classical setting.

\begin{proof}[Proof of Theorem~\ref{thm:MainNew35}]
Let $a,x\in I$ and let $n\in\N^*$, with $n\geq q$. Using Proposition~\ref{prop:NewAn56Tay}, we can immediately derive the formula
\begin{equation}\label{eq:NewEq554}
f(x)-\sum_{k=0}^{q-1}\tchoose{x-a}{k}\,\Delta^kf(a) ~=~ \sum_{k=q}^{n-1}\tchoose{x-a}{k}\,\Delta^kf(a) + R_n(x),
\end{equation}
where
$$
R_n(x) ~=~ (x-a)^{\underline{n}}\, f[a,a+1,\ldots,a+n-1,x].
$$

The assumptions on $f$ imply that both functions $(-1)^{n-q}f^{(n)}$ and $(-1)^{n-q+1}f^{(n+1)}$ lie in $\cK_{1}^{-1}(I)$. By Proposition \ref{prop:Kuczma8}(b), this means that the function $f$ lies in the intersection set
$$
\cK^{n-1}_{(-1)^{n-q}}(I) ~\cap ~ \cK^n_{(-1)^{n-q+1}}(I).
$$
Proposition \ref{prop:Kuczma8}(c) then tells us that the map
$$
x~\mapsto ~ f[a,a+1,\ldots,a+n-1,x]
$$
is either negative and continuously increasing or positive and continuously decreasing.

Now, let also $b\in I$, with $b<\min\{a,x\}$. Using the latter observation, we immediately derive the following inequalities:
\begin{eqnarray}
0 ~ \leq ~ |R_n(x)| &=& \big|(x-a)^{\underline{n}}\big|\,\big|f[a,a+1,\ldots,a+n-1,x]\big|\nonumber\\
& \leq & \big|(x-a)^{\underline{n}}\big|\,\big|f[a,a+1,\ldots,a+n-1,b]\big|\nonumber\\
&=& \bigg|\frac{(x-a)^{\underline{n}}}{(b-a)^{\underline{n}}}\bigg|\, |R_n(b)|.\label{eq:CoeffRnb}
\end{eqnarray}

On the other hand, for any integer $k\geq q$, we have
$$
\tchoose{b-a}{k}\,\Delta^kf(a) ~=~ \frac{(b-a)^{\underline{k}}}{k!}\,\Delta^kf(a) ~=~ \frac{\big|(b-a)^{\underline{k}}\big|}{k!}\, (-1)^k\Delta^kf(a)
$$
and this expression has the sign of $(-1)^q$, since the function $(-1)^kf$ lies in $\cK^{k-1}_{(-1)^q}(I)$ (which implies that $(-1)^k\Delta^kf$ lies in $\cK^{-1}_{(-1)^q}(I)$ by Proposition \ref{prop:Kuczma8}(a)). Moreover, we have
\begin{eqnarray*}
R_n(b) &=& (b-a)^{\underline{n}}\, f[a,a+1,\ldots,a+n-1,b]\\
&=& \big|(b-a)^{\underline{n}}\big|\ (-1)^n\, f[a,a+1,\ldots,a+n-1,b],
\end{eqnarray*}
which also has the sign of $(-1)^q$, since the function $(-1)^nf$ lies in $\cK^{n-1}_{(-1)^q}(I)$.

Since the identity in \eqref{eq:NewEq554} also holds when $x=b$, we then obtain
$$
\bigg|f(b)-\sum_{k=0}^{q-1}\tchoose{b-a}{k}\,\Delta^kf(a)\bigg| ~=~ \bigg|\sum_{k=q}^{n-1}\tchoose{b-a}{k}\,\Delta^kf(a)\bigg|+|R_n(b)|
$$
and hence
$$
|R_n(b)| ~\leq ~ \bigg|f(b)-\sum_{k=0}^{q-1}\tchoose{b-a}{k}\,\Delta^kf(a)\bigg|.
$$
By combining this latter inequality with the double inequality in \eqref{eq:CoeffRnb}, we derive the following alternative estimate, where only the first factor depends on $n$:
$$
0 ~\leq ~ |R_n(x)| ~\leq ~ \bigg|\frac{(x-a)^{\underline{n}}}{(b-a)^{\underline{n}}}\bigg|~ \bigg|f(b)-\sum_{k=0}^{q-1}\tchoose{b-a}{k}\,\Delta^kf(a)\bigg|.
$$

Lemma~\ref{lemma:546-Levrie} then implies that the sequence $n\mapsto R_n(x)$ converges uniformly to zero on compact subsets of $(b,\infty)$. Since $b<\min\{a,x\}$ was arbitrary, it follows that the convergence to zero is uniform on compact subsets of the entire interval $I$.
\end{proof}

\begin{remark}
Although not widely known, every Newton series converges uniformly on compact subsets of its domain of convergence (see Gel'fond~\cite[Chapter~2]{Gel71}). For this reason, we have explicitly verified this fact in the proof of Theorem~\ref{thm:MainNew35}.
\end{remark}

\begin{example}\label{ex:1x}
We can readily see that the restriction of the reciprocal function $f(x)=1/x$ to $\R_+$ is completely monotone. By Theorem~\ref{thm:MainNew35}, it follows that this function has a Newton series expansion on $\R_+$ at every point $a>0$. Now, for any $k\in\N$ and any $a>0$, we have (see also Graham \emph{et al.}\ \cite[p.~188]{GraKnuPat94})
$$
\Delta^kf(a) ~=~ (-1)^k\,\frac{k!}{a(a+1)~\cdots ~(a+k)} ~=~ \frac{(-1)^k}{a{\,}{a+k\choose k}}{\,}.
$$
Hence, for any $a>0$, the reciprocal function admits the Newton series expansion:
$$
\frac{1}{x} ~=~ \frac{1}{a}\,\sum_{k=0}^{\infty}(-1)^k\,\frac{\tchoose{x-a}{k}}{{a+k\choose k}}\qquad (x>0).
$$
This identity can be regarded as the discrete analog of the Taylor series expansion of $1/x$ about the point $a$, with the notable distinction that this expansion is valid for all $x>0$.
\end{example}

\begin{remark}\label{rem:BinThm}
The classical \emph{binomial theorem} \cite[p.\ 163]{GraKnuPat94} states that the identity
$$
(c+1)^x ~=~ \sum_{k=0}^{\infty}\tchoose{x}{k}\, c^k\qquad (x\in\R),
$$
or equivalently,
$$
(c+1)^x ~=~ (c+1)^a\, \sum_{k=0}^{\infty}\tchoose{x-a}{k}\, c^k\qquad (a,x\in\R),
$$
holds for $-1<c<1$. When $c>1$, the latter series diverges by the ratio test (unless $x-a$ is a nonnegative integer). This result shows that the exponential function
$$
f(x) ~=~ (c+1)^x\qquad (c>-1,~x\in\R)
$$
admits a Newton series expansion on $\R$ at every $a\in\R$ when $-1<c<1$, but not when $c>1$. Moreover, we can easily see that this function is completely monotone if $-1<c\leq 0$, and absolutely monotone if $c\geq 0$.

For example, the completely monotone function $f_1(x)=e^{-x}$ admits a Newton series expansion on $\R$ at every point $a\in\R$, as does the absolutely monotone function $f_2(x)=(e/2)^x$. In contrast, the absolutely monotone function $f_3(x)=e^x$ does not admit such expansions.

Interestingly, the function $f_2$ demonstrates that a function admitting a Newton series expansion at every point $a\in\R$ need not be completely monotone. The function $f_3$ illustrates that an absolutely monotone function need not admit a Newton series expansion. Together, $f_1$ and $f_3$ illustrate that, although the map $x\mapsto -x$ preserves real analyticity, it does not, in general, preserve the existence of Newton series expansions.
\end{remark}

\section{Applications to Principal Indefinite Sums}

By definition, the derivatives of a completely monotone function alternate in sign. While this may seem like a very special property, it in fact offers a natural framework for a broad class of functions $f\colon\,\R_+\to\R$ known as \emph{principal indefinite sums}, whose definition we will now recall.

We say that a function $f\colon\,\R_+\to\R$ is \emph{eventually $p$-convex} (resp.\ \emph{eventually $p$-concave}) for some integer $p\geq -1$ if it lies in $\cK^p_1(I)$ (resp.\ $\cK^p_{-1}(I)$) for some right-unbounded subinterval $I$ of\/ $\R_+$. We denote by $\cK^p_1$ (resp.\ $\cK^p_{-1}$) the set of functions $f\colon\,\R_+\to\R$ that are eventually $p$-convex (resp.\ eventually $p$-concave), and we define
$$
\cK^p ~=~ \cK^p_1\cup\cK^p_{-1}.
$$
For any $p\in\N$, we also let $\cD^p$ denote the set of functions $g\colon\,\R_+\to\R$ such that the sequence $n\mapsto\Delta^p g(n)$ converges to zero. It is known \cite[Theorem 4.14]{MarZen22} that a function $g\in\cC^p(\R_+)$ lies in $\cD^p\cap\cK_1^p$ (resp. $\cD^p\cap\cK_{-1}^p$) if and only if $g^{(p)}$ eventually increases (resp.\ decreases) to zero.

The last two authors established the following theorem in [15, Theorem 3.6] (see also [16, Theorem 1.6]), which, as demonstrated in Example~\ref{ex:Bohr}, constitutes a broad generalization of the classical \emph{Bohr-Mollerup theorem} \cite{BohMol22}.

\begin{theorem}\label{thm:SigmaEx3}
If $g$ lies in $\cD^p\cap\cK^p$ for some $p\in\N$, then there exists a unique solution $f\in\cK^p$, satisfying $f(1)=0$, to the difference equation $\Delta f=g$ on $\R_+$. Moreover,
\begin{equation}\label{eq:ffnp6}
f(x) ~=~ \lim_{n\to\infty}f_n^p[g](x)\qquad (x>0),
\end{equation}
where
$$
f_n^p[g](x) ~=~ \sum_{k=1}^{n-1}g(k)-\sum_{k=0}^{n-1}g(x+k)
+\sum_{j=1}^p\tchoose{x}{j}\,\Delta^{j-1}g(n)\qquad (x>0),
$$
and $f$ is $p$-convex (resp.\ $p$-concave) on any right-unbounded subinterval of\/ $\R_+$ on which $g$ is $p$-concave (resp.\ $p$-convex). Furthermore, the convergence in \eqref{eq:ffnp6} is uniform on any bounded subset of\/ $\R_+$.
\end{theorem}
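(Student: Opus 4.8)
\medskip

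\noindent We sketch a proof. Since $\cK^p=\cK_1^p\cup\cK_{-1}^p$, and since replacing $g$ by $-g$ turns an eventually $p$-concave function into an eventually $p$-convex one and sends each solution $f$ of $\Delta f=g$ to the solution $-f$ of $\Delta f=-g$, it suffices to treat the case where $g$ is $p$-convex on some right-unbounded subinterval $(c,\infty)\subset\R_+$. One then establishes, in order: (i) the pointwise limit $f(x):=\lim_n f_n^p[g](x)$ exists for every $x>0$; (ii) $\Delta f=g$ and $f(1)=0$; (iii) $f$ inherits the higher-order convexity of $g$, so in particular $f\in\cK^p$; (iv) $f$ is the \emph{only} element of $\cK^p$ with $\Delta f=g$ and $f(1)=0$; and (v) the convergence is uniform on bounded subsets of $\R_+$. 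The engine throughout is the Newton-Taylor expansion of Proposition~\ref{prop:NewAn56Tay}, applied at integer base points.

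For (i) and (ii) the starting point is a telescoping identity: a short computation based on Proposition~\ref{prop:NewAn56Tay} (at base point $n$, target $n+x$, and nodes $n,n+1,\dots,n+p$) gives
$$
f_{n+1}^p[g](x)-f_n^p[g](x) ~=~ -\,x^{\underline{p+1}}\,g[n,n+1,\dots,n+p,n+x]\qquad(x>0).
$$
For $n$ large all the nodes lie in $(c,\infty)$, so $p$-convexity gives $g[n,\dots,n+p,n+x]\geq 0$ and the increments keep a fixed sign; hence $n\mapsto f_n^p[g](x)$ is eventually monotone. The decisive point is an upper bound on this divided difference: via the standard recursion one writes $g[n,\dots,n+p,n+x]$ as a difference of two $p$-point divided differences, and using the monotonicity of divided differences of $p$-convex functions in each node (Proposition~\ref{prop:Kuczma8}(c)) one squeezes each of them between the values $\Delta^p g(m)/p!$ at neighbouring integers $m$, which for $x\in(0,1)$ yields
$$
0 ~\leq~ g[n,\dots,n+p,n+x] ~\leq~ \frac{1}{p\cdot p!}\,\big(\Delta^{p+1}g(n-1)+\Delta^{p+1}g(n)\big).
$$
Since $g\in\cD^p$, the sequence $\Delta^p g(n)$ tends to $0$, hence $\sum_n\Delta^{p+1}g(n)$ converges; summing the telescoping identity then produces $f(x)$ for $x\in(0,1)$. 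The relation $\Delta f=g$ comes from the analogous identity $f_n^p[g](x+1)-f_n^p[g](x)=g(x)-x^{\underline{p}}\,g[n,\dots,n+p-1,n+x]$, whose error term is squeezed to $0$ by the same mechanism; this, together with $f(1)=0$ (immediate since $\tchoose{1}{j}=0$ for $j\geq 2$ and the surviving terms of $f_n^p[g](1)$ cancel), also extends (i) to all $x>0$.

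For (iii), fix a right-unbounded subinterval $I\subset\R_+$ on which $g$ is $p$-concave; for $x\in I$ every translate $x+k$ lies in $I$, so each $x\mapsto -g(x+k)$ is $p$-convex on $I$, while $x\mapsto\sum_{j=1}^p\tchoose{x}{j}\Delta^{j-1}g(n)$ is a polynomial of degree $\leq p$ and hence both $p$-convex and $p$-concave; thus each $f_n^p[g]$ is $p$-convex on $I$, and since $p$-convexity passes to pointwise limits, so is $f$ (symmetrically in the concave case), giving $f\in\cK^p$. For (iv), let $h\in\cK^p$ with $\Delta h=g$ and $h(1)=0$; expanding $h(n+x)$ by Proposition~\ref{prop:NewAn56Tay} and using $\Delta h=g$ and $h(1)=0$ to read off the integer data gives $h(x)-f_n^p[g](x)=x^{\underline{p+1}}\,h[n,\dots,n+p,n+x]$, and the squeeze of step (i) applied to $h$---the bound now being governed by $\Delta^{p+1}h(n)=\Delta^p g(n)\to 0$---forces the right-hand side to $0$, so $h=f$. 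Claim (v) holds because the bound in step (i) does not depend on $x$ while $|x^{\underline{p+1}}|$ is bounded on bounded sets, so the tail estimate is uniform there.

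The crux is the divided-difference estimate in step (i) (and its twin in step (iv)). For $p=1$ it is the classical monotonicity of secant slopes behind Artin's proof of the Bohr-Mollerup theorem; for general $p$ it must be carried out at the level of higher-order divided differences, simultaneously exploiting the sign information coming from $p$-convexity and the summability and decay coming from the $\cD^p$ hypothesis, and taking care that the monotonicity of divided differences is available under the limited regularity ($\cC^{p-2}$, not $\cC^p$) automatically enjoyed by $p$-convex functions. Everything else is bookkeeping with $\Delta$ and Proposition~\ref{prop:NewAn56Tay}.
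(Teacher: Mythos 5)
You should first note a structural point: the paper itself contains no proof of Theorem~\ref{thm:SigmaEx3}; it is imported verbatim from \cite[Theorem 3.6]{MarZen22} (see also \cite{MarZen24}), so there is no in-paper argument to compare yours against. Judged on its own, your sketch is essentially sound and runs on the same fuel as the original source: Newton-remainder identities at integer nodes plus sign/monotonicity of divided differences of eventually $p$-convex functions plus the decay supplied by $\cD^p$. Your two key identities are correct consequences of Proposition~\ref{prop:NewAn56Tay}: indeed $f_{n+1}^p[g](x)-f_n^p[g](x)=-x^{\underline{p+1}}\,g[n,\dots,n+p,n+x]$, and for any solution $h\in\cK^p$ of $\Delta h=g$ with $h(1)=0$ one gets $h(x)-f_n^p[g](x)=x^{\underline{p+1}}\,h[n,\dots,n+p,n+x]$ after translating $\Delta^k h(n)$ into data of $g$. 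Your divided-difference bound for $x\in(0,1)$ also checks out (via the recursion with extreme nodes $n$ and $n+p$, pushing the two $(p+1)$-point differences to consecutive-integer nodes), and since $\Delta^{p+1}g(n)\geq 0$ eventually while $\Delta^p g(n)\to 0$, the increments are absolutely summable, which gives both existence of the limit and the Cauchy-type tail estimate.

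Three points need tightening. First, the citation of Proposition~\ref{prop:Kuczma8}(c) is not quite licit: that statement assumes $\cC^p$ regularity, which a merely $p$-convex $g$ need not have; but the worry you flag at the end dissolves, because only monotonicity (not continuity) in each node is needed, and that follows with no regularity at all from $g[z_1,\dots,z_p,y]-g[z_1,\dots,z_p,x]=(y-x)\,g[z_1,\dots,z_p,x,y]\geq 0$ for $y>x$. Second, your constant $\frac{1}{p\cdot p!}$ is meaningless for $p=0$; that case needs the direct elementary estimate $0\leq g(n+x)-g(n)\leq\Delta g(n)$ for $x\in(0,1)$ (the rest of the argument is unchanged). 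Third, claim (v) is stated too quickly: the step-(i) bound was derived only for $x\in(0,1)$, so uniform convergence on a bounded set requires iterating the identity $f_n^p[g](x+1)-f_n^p[g](x)=g(x)-x^{\underline{p}}\,g[n,\dots,n+p-1,n+x]$ finitely many times and checking that these error terms also vanish uniformly on bounded sets (they do, by the same squeeze between values of $\Delta^p g$ at nearby integers, both of which tend to $0$). With these repairs your argument is a correct and compact proof, close in spirit to the one in \cite{MarZen22}, where uniqueness and existence are handled separately with explicit error bounds of the same Newton-remainder type.
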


Let us denote by $f=\Sigma g$ the function $f\colon\,\R_+\to\R$ defined in Theorem~\ref{thm:SigmaEx3}. With this notation, the map $\Sigma$ carries any function
$$
g ~\in ~ \bigcup_{p\geq 0}\, (\cD^p\cap\cK^p)
$$
into the function $f\colon\,\R_+\to\R$ given by Eq.~\eqref{eq:ffnp6}. This definition is well-posed in the sense that if a function $g$ lies in both $\cD^p\cap\cK^p$ and $\cD^q\cap\cK^q$ for some $p,q\in\N$, with $q\geq p$, then we have
$$
\lim_{n\to\infty}\big(f_n^q[g](x)-f_n^p[g](x)\big) ~=~ \lim_{n\to\infty}\,\sum_{j=p+1}^q\tchoose{x}{j}\,\Delta^{j-1}g(n) ~=~ 0 \qquad (x>0).
$$

We define the \emph{principal indefinite sum} \cite[Definition 5.4]{MarZen22} of a function $g$ in the domain of $\Sigma$ to be the class of functions of the form $c+\Sigma g$, where $c\in\R$. As such, principal indefinite sums form a broad class of functions that arise frequently in mathematical analysis. Numerous examples and applications are examined in detail in \cite[Chapters 10--12]{MarZen22}.

\begin{example}[Bohr-Mollerup's Theorem]\label{ex:Bohr}
Applying Theorem~\ref{thm:SigmaEx3} to the basic function $g(x)=\ln x$ with the parameter $p=1$ reduces to the additive version of \emph{Bohr-Mollerup's theorem} \cite{BohMol22}; see also Krull \cite{Kru48,Kru49} and Webster \cite{Web97b} for earlier work on this topic. It states that the log-gamma function $f(x)=\ln\Gamma(x)$ is the unique solution to the equation
$$
\Delta f(x) ~=~ \ln x\qquad (x>0)
$$
that vanishes at $x=1$ and is eventually convex or eventually concave. It also provides the additive form of Gauss' well-known limit:
$$
\ln\Gamma(x) ~=~ \lim_{n\to\infty}\bigg(\sum_{k=1}^{n-1}\ln k-\sum_{k=0}^{n-1}\ln(x+k)+x\ln n\bigg)\qquad (x>0).
$$
Thus, we can write
$$
\Sigma \ln x ~=~ \ln\Gamma(x)\qquad (x>0).
$$
Moreover, with a slight abuse of terminology, one may say that \emph{the principal indefinite sum of the logarithm function is the log-gamma function}.
\end{example}

We can readily see that the inclusion $\cD^p\subset\cD^{p+1}$ holds for all $p\in\N$. A less trivial inclusion, however, is that $\cK^p\subset\cK^{p-1}$ for all $p\in\N$ (see \cite[Proposition 4.7]{MarZen22}). This latter property naturally motivates the introduction of the intersection
$$
\cK^{\infty} ~=~ \bigcap_{p\geq -1}\cK^p.
$$

Theorem~\ref{thm:SigmaEx3} implies that if a function $g$ lies in $\cD^p\cap\cK^p$ for some $p\in\N$, then $\Sigma g$ exists and lies in $\cD^{p+1}\cap\cK^p$. In particular, if $g\in\cD^p\cap\cK^{\infty}$, then $\Sigma g\in\cD^{p+1}\cap\cK^{\infty}$. It is important to note, however, that the condition $g\in\cK^{\infty}$ does not necessarily imply that $g\in\cK^n(I)$ for all sufficiently large $n$---or equivalently, that $g|_I$ has a regularly monotone derivative---where $I$ is a fixed right-unbounded interval of\/ $\R_+$. As observed in \cite[Example 5.13]{MarZen22}, this situation arises, for example, with the function $g\colon\,\R_+\to\R$ defined by
$$
g(x) ~=~ -\frac{1}{x}\,\ln x\qquad (x>0).
$$ 

The following theorem essentially states that if a function $g\colon\,\R_+\to\R$ has a derivative that eventually tends monotonically to zero, and if a higher order derivative is regularly monotone, then the principal indefinite sum $\Sigma g$ exists and both $g$ and $\Sigma g$ possess completely monotone derivatives. In particular, they are real analytic and admit a Newton series expansion on $\R_+$ at every point $a>0$. We begin with a technical lemma, followed by a preparatory proposition.

\begin{lemma}[{see \cite[Corollary 4.19]{MarZen22}}]\label{lemma:44main}
For any $p\in\N$ and any right-unbounded interval $I$ of\/ $\R_+$, the following inclusion holds: $\cD^p\cap\cK^p_{1}(I)\subseteq\cK^{p-1}_{-1}(I)$.
\end{lemma}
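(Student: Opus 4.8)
The plan is to show directly that $g[x_0,\ldots,x_p]\le 0$ for every strictly increasing tuple $x_0<x_1<\cdots<x_p$ in $I$ --- which is exactly the assertion $g\in\cK^{p-1}_{-1}(I)$ --- by pushing the evaluation nodes out to infinity along the integers, so that the defining property of $\cD^p$ can be brought to bear.

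First I would record a monotonicity property of divided differences that uses only $p$-convexity, and in particular no differentiability of $g$ (which is why Proposition~\ref{prop:Kuczma8}(c) is not invoked). From the Newton recursion together with the symmetry of divided differences in their arguments, one has, for pairwise distinct nodes and $a<b$,
\[
g[z_0,\ldots,z_{p-1},b]-g[z_0,\ldots,z_{p-1},a] ~=~ (b-a)\,g[z_0,\ldots,z_{p-1},a,b],
\]
and, again by symmetry, the same identity holds with the varying node placed in any position. If $g\in\cK^p_1(I)$ and all the points involved lie in $I$ and stay strictly ordered, the divided difference on the right has $p+2$ arguments and is therefore $\ge 0$; hence $(z_0,\ldots,z_p)\mapsto g[z_0,\ldots,z_p]$ is nondecreasing in each of its arguments over strictly increasing tuples in $I$.

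Now, given $x_0<\cdots<x_p$ in $I$, I would use that $I$ is right-unbounded to choose an integer $N$ with $x_p<N$ and $N,N+1,\ldots,N+p\in I$, and then raise the nodes one at a time, from the top down, transforming $(x_0,\ldots,x_p)$ into $(N,N+1,\ldots,N+p)$. Each such move keeps the tuple strictly increasing, because $x_i<x_p<N\le N+i$, and it does not decrease the divided difference by the previous paragraph; therefore
\[
g[x_0,\ldots,x_p] ~\le~ g[N,N+1,\ldots,N+p] ~=~ \frac{1}{p!}\,\Delta^p g(N),
\]
the last equality being the classical expression of the divided difference at consecutive integers in terms of the forward difference. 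Letting $N\to\infty$ and using $g\in\cD^p$, i.e.\ $\Delta^p g(N)\to 0$, yields $g[x_0,\ldots,x_p]\le 0$, as required.

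I do not anticipate a real obstacle; the argument is short. The two points needing care are the symmetric form of the divided-difference recursion --- stated precisely so that no regularity of $g$ beyond being real-valued is used --- and checking that the successive single-node moves never violate the strict ordering of the tuple, which reduces to the elementary inequality $x_i<x_p<N\le N+i$. The one idea that makes the proof work is the comparison with divided differences on consecutive-integer nodes far out in $I$, which is precisely what lets the hypothesis $g\in\cD^p$ enter.
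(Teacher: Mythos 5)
Your argument is correct. Note first that the paper itself does not prove this lemma: it is imported verbatim from \cite[Corollary 4.19]{MarZen22}, so there is no in-paper proof to compare against, and your write-up in effect supplies the missing argument. The key identity
$$
g[z_0,\ldots,z_{p-1},b]-g[z_0,\ldots,z_{p-1},a] ~=~ (b-a)\, g[z_0,\ldots,z_{p-1},a,b]
$$
is indeed valid for an arbitrary real-valued function and pairwise distinct nodes (Newton recursion combined with the symmetry of divided differences), so $p$-convexity alone yields that the $(p+1)$-point divided difference is nondecreasing in each argument; this is a regularity-free variant of Proposition~\ref{prop:Kuczma8}(c), which is exactly what is needed here since no differentiability of $g$ is assumed. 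The top-down node-raising preserves strict ordering (your inequality $x_i<x_p<N\leq N+i$), all intermediate nodes lie in $I$ because $I$ is a right-unbounded interval containing $x_p$, the identity $g[N,N+1,\ldots,N+p]=\Delta^p g(N)/p!$ is the classical one (also exploited in Proposition~\ref{prop:NewAn56Tay}), and $\cD^p$ is precisely the statement that $\Delta^p g(N)\to 0$ along integers, so letting $N\to\infty$ gives $g[x_0,\ldots,x_p]\leq 0$, i.e.\ $g\in\cK^{p-1}_{-1}(I)$; the boundary case $p=0$ is covered as well. The mechanism--comparing an arbitrary divided difference with the $p$th forward difference at consecutive integer nodes pushed to infinity--is presumably close to the argument in the cited reference, so I would regard this as the expected proof rather than a genuinely different route; in any case it is complete and correct as it stands.
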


\begin{proposition}\label{prop:44maisnSg}
Let $p,q\in\N$ with $p<q$, and let $g\colon\,\R_+\to\R$ be an infinitely differentiable function. Suppose that $g^{(p)}$ eventually tends monotonically to zero, and that $g^{(q)}$ is regularly monotone. Then, the following assertions hold:
\begin{itemize}
\item[(a)] $g^{(n)}$ eventually tends monotonically to zero for every $n\geq p$.
\item[(b)] $g^{(q-1)}$ or $-g^{(q-1)}$ is completely monotone.
\end{itemize}
Consequently, $g$ is real analytic and admits a Newton series expansion on $\R_+$ at every point $a>0$.
\end{proposition}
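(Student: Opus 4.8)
The plan is to first establish assertions (a) and (b) by tracking the behaviour of the successive derivatives $g^{(n)}$ near $+\infty$, and then to derive the final claim from Theorem~\ref{thm:MainNew35} together with Bernstein's little theorem (Theorem~\ref{thm:21BerLiTh} and the discussion of completely monotone derivatives in Section~2). Throughout, the key structural point is that the regular monotonicity of $g^{(q)}$ is imposed on all of $\R_+$, so that $g^{(n)}$ has constant sign on $\R_+$ for every $n\geq q$, and hence $g^{(n)}$ is monotone on $\R_+$ for every $n\geq q-1$.

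To prove (a), I would first show that $g^{(n)}(x)\to 0$ as $x\to\infty$ for every $n\geq q-1$. Since each such $g^{(n)}$ is monotone on $\R_+$, it has a limit $L_n\in[-\infty,+\infty]$ at $+\infty$; if $L_n\neq 0$, then $g^{(n)}$ is eventually bounded away from zero with fixed sign, and integrating successively shows that $g^{(n-1)},g^{(n-2)},\dots,g^{(p)}$ all tend to $\pm\infty$, contradicting the hypothesis that $g^{(p)}$ tends to zero (here $n\geq q-1\geq p$, the case $n=p$ being immediate). Hence $L_n=0$, i.e.\ $g^{(n)}$ tends monotonically to zero, for every $n\geq q-1$. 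The remaining orders $p\leq n<q-1$ are settled by downward induction: if $g^{(n+1)}$ tends monotonically to zero, then it is eventually of constant sign, so $g^{(n)}$ is eventually monotone, has a limit at $+\infty$, and that limit is again $0$ by the same integration argument; for $n=p$ the statement is the hypothesis. This proves (a). (Equivalently, one may run this step through higher-order convexity: by Proposition~\ref{prop:Kuczma8}(b) and \cite[Theorem 4.14]{MarZen22} the hypotheses read $g\in\cD^p\cap\cK^p$ and $g\in\cK^m(\R_+)$ for all $m\geq q-1$, and Lemma~\ref{lemma:44main}, applied to $g$ and to $-g$, propagates these memberships down to every level $m\geq p-1$.)

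For (b), observe that by (a) each $g^{(n)}$ with $n\geq q-1$ is monotone on all of $\R_+$ with limit $0$ at $+\infty$; a function that is non-decreasing (resp.\ non-increasing) on $\R_+$ with limit $0$ at $+\infty$ is nonpositive (resp.\ nonnegative) there, so $g^{(n)}$ keeps a constant sign on $\R_+$, opposite to that of $g^{(n+1)}=(g^{(n)})'$. Thus the signs of $g^{(q-1)},g^{(q)},g^{(q+1)},\dots$ alternate, so, choosing $\varepsilon\in\{-1,1\}$ with $\varepsilon\,g^{(q-1)}\geq 0$ on $\R_+$, we obtain $(-1)^n\varepsilon\,g^{(q-1+n)}\geq 0$ on $\R_+$ for all $n\in\N$; that is, $\varepsilon\,g^{(q-1)}$ is completely monotone on $\R_+$, which is (b). (If some $g^{(n)}$ with $n\geq q-1$ were identically zero, the hypothesis on $g^{(p)}$ would force $g$ to be a polynomial of degree $<p$, and every assertion is then trivial.) Finally, if $g^{(q-1)}$ is completely monotone on $\R_+$, Theorem~\ref{thm:MainNew35} applied to $f=g$ with $q-1$ in place of $q$ yields the Newton series expansion of $g$ on $\R_+$ at every $a>0$; if instead $-g^{(q-1)}$ is completely monotone, then $g^{(q)}=-\bigl(-g^{(q-1)}\bigr)'$ is completely monotone on $\R_+$ (the negated derivative of a completely monotone function is completely monotone), and Theorem~\ref{thm:MainNew35} applies to $f=g$ directly. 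In either case $g$ has a completely monotone derivative on $\R_+$, whence $g$ is real analytic on $\R_+$ by Bernstein's little theorem.

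I expect the main obstacle to be the first step of the proof of (a): one is given only ``tends to $0$'' at order $p$, whereas monotonicity — and hence the mere existence of a limit — is supplied by the regular monotonicity only from order $q-1$ onwards, so the intermediate orders must be reached and the possibility of a nonzero, finite or infinite, limit excluded; the successive-integration argument is precisely what does this, and it relies essentially on the monotonicity of the high-order derivatives holding on all of $\R_+$, which is also what makes the sign alternation in (b) valid globally and thereby permits Theorem~\ref{thm:MainNew35} to be invoked with $I=\R_+$.
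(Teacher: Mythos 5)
Your proof is correct, but it reaches (a) and (b) by a genuinely different, more elementary route than the paper. For (a), the paper stays entirely inside the higher-order convexity framework: from the regular monotonicity of $g^{(q)}$ it gets $g\in\cK^{n-1}(\R_+)$ for all $n\geq q$, combines this with $g\in\cD^p\subset\cD^n$ and the eventual-class inclusions to conclude $g\in\cD^n\cap\cK^n$ for all $n\geq p$, and then invokes the cited characterization \cite[Theorem 4.14]{MarZen22} (``$g\in\cD^n\cap\cK^n_{\pm 1}$ iff $g^{(n)}$ eventually increases/decreases to zero''); you instead run a direct calculus argument (monotone derivatives have limits, and a nonzero limit at any order $>p$ would force $g^{(p)}\to\pm\infty$ by successive integration), which is self-contained and only uses the global constant signs of $g^{(n)}$, $n\geq q$. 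For (b), the paper again argues in the classes: after normalizing $g^{(q)}\leq 0$ it applies Lemma~\ref{lemma:44main} repeatedly to force $g\in\cK^{n-2}_{(-1)^{n-q}}(\R_+)$ for all $n\geq q$, handling the ambiguity in sign through the polynomial-degeneracy argument ($g\in\cK^{q-1}_1\cap\cK^{q-1}_{-1}$ forces $g^{(q)}\equiv 0$); your version replaces this with the transparent observation that a function monotone on all of $\R_+$ with limit $0$ at $+\infty$ has constant sign weakly opposite to that of its derivative, which yields the global sign alternation from order $q-1$ on, with the same polynomial escape hatch for the degenerate case. The final deduction is the same in both proofs (Theorems~\ref{thm:21BerLiTh} and \ref{thm:MainNew35}), though you are more explicit than the paper about the case where $-g^{(q-1)}$ is completely monotone, correctly passing to $g^{(q)}$ so that Theorem~\ref{thm:MainNew35} applies to $g$ itself. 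What your route buys is independence from the external results \cite[Theorem 4.14, Corollary 4.19]{MarZen22} and a concrete picture of why the signs alternate; what the paper's route buys is brevity given the available machinery and conclusions already phrased in the $\cD^n\cap\cK^n$ language that is reused verbatim in the proof of Theorem~\ref{thm:44maisnSg}.
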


\begin{proof}
Assume that $g$ satisfies the stated conditions. Then $g\in\cK^{n-1}(\R_+)$ for all $n\geq q\geq p+1$. Moreover, $g\in\cD^n\cap\cK^n$ for all $n\geq p$, which establishes assertion (a).

Negating $g$ if necessary, we may assume that $g^{(q)}\leq 0$, which precisely means that $g\in\cK^{q-1}_{-1}(\R_+)$. Using Lemma~\ref{lemma:44main}, we conclude that $g\in\cK^{q-2}_{1}(\R_+)$. To establish assertion (b), it suffices to show that $g^{(q-1)}$ is completely monotone; that is,
\begin{equation}\label{eq:gSgfAk0}
g ~\in ~ \cK^{n-2}_{(-1)^{n-q}}(\R_+)\quad\text{for all $n\geq q$}.
\end{equation}
We begin by showing that $g\in\cK^{q}_{1}(\R_+)$. Suppose that $g\in\cK^{q}_{-1}(\R_+)$. Then, by Lemma~\ref{lemma:44main}, it follows that $g\in\cK^{q-1}_{1}(\R_+)$. Since $g$ also lies in $\cK^{q-1}_{-1}(\R_+)$, the $q$th derivative of $g$ must be identically zero. Hence, $g$ is a polynomial of degree at most $q-1$, and in particular, $g\in\cK^{q}_{1}(\R_+)$. This argument can be iterated to establish the condition stated in \eqref{eq:gSgfAk0}.

The final part of the proposition follows directly from assertion (b), together with Theorems~\ref{thm:21BerLiTh} and \ref{thm:MainNew35}. Recall that Theorem~\ref{thm:21BerLiTh} remains valid if the assumption that $f^{(q)}$ is absolutely monotone is replaced by the condition that $f^{(q)}$ is completely monotone.
\end{proof}

\begin{theorem}\label{thm:44maisnSg}
Under the assumptions of Proposition~\ref{prop:44maisnSg}, the function $\Sigma g$ exists and is infinitely differentiable. Moreover, the following assertions hold:
\begin{itemize}
\item[(a)] $(\Sigma g)^{(n)}$ eventually tends monotonically to zero for every $n\geq p+1$.
\item[(b)] Both $g^{(q-1)}$ and $(\Sigma g)^{(q)}$, or their negatives, are completely monotone.
\end{itemize}
Consequently, $\Sigma g$ is real analytic and admits a Newton series expansion on $\R_+$ at every point $a>0$.
\end{theorem}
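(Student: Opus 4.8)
The plan is to deduce everything from structural facts already in hand---Theorem~\ref{thm:SigmaEx3} together with the remark that $g\in\cD^p\cap\cK^{\infty}$ implies $\Sigma g\in\cD^{p+1}\cap\cK^{\infty}$, the dictionary translating complete and absolute monotonicity into higher-order convexity, and the inclusion $\cK^{p+1}(I)\subset\cC^p(I)$ from \eqref{eq:Incl32p}---the only genuinely new point being the infinite differentiability of $\Sigma g$. First I would record existence: by hypothesis (equivalently, by Proposition~\ref{prop:44maisnSg}(a) with $n=p$) the derivative $g^{(p)}$ eventually tends monotonically to zero, so $g\in\cD^p\cap\cK^p$ and Theorem~\ref{thm:SigmaEx3} produces $\Sigma g$. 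Next, negating $g$ if necessary---which merely negates $\Sigma g$ and leaves the hypotheses intact---Proposition~\ref{prop:44maisnSg}(b) lets us assume $g^{(q-1)}$ is completely monotone; then $g^{(j)}$ keeps a constant sign on $\R_+$ for every $j\geq q-1$, so by \eqref{eq:Incl32p} and Proposition~\ref{prop:Kuczma8}(b) we get $g\in\cK^{m}(\R_+)$ for all $m\geq q-2$, and the nesting $\cK^{m}\subset\cK^{m-1}$ then yields $g\in\cK^{\infty}$. Hence $g\in\cD^p\cap\cK^{\infty}$, and the quoted remark gives $\Sigma g\in\cD^{p+1}\cap\cK^{\infty}$.

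The core of the proof is that $\Sigma g$ is infinitely differentiable, which is not obvious because $\Sigma g$ is defined only through the limit \eqref{eq:ffnp6}. The idea is that $\cK^{\infty}$-membership already forces local smoothness far to the right: fix $n\in\N$; since $\Sigma g\in\cK^{\infty}\subseteq\cK^{n+1}$, the function $\Sigma g$ is $(n+1)$-convex or $(n+1)$-concave on some right-unbounded open subinterval $(M,\infty)$ of $\R_+$, whence $\Sigma g\in\cC^{n}\big((M,\infty)\big)$ by \eqref{eq:Incl32p}. Now rewrite the difference equation $\Delta(\Sigma g)=g$ as $(\Sigma g)(x)=(\Sigma g)(x+1)-g(x)$: since $g\in\cC^{\infty}(\R_+)$, this identity propagates $\cC^{n}$-regularity from $(M,\infty)$ to $(M-1,\infty)$, then to $(M-2,\infty)$, and after finitely many steps to all of $\R_+$. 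As $n\in\N$ was arbitrary, $\Sigma g\in\cC^{\infty}(\R_+)$.

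Assertion (a) then falls out at once: for $n\geq p+1$ we have $\Sigma g\in\cD^{p+1}\cap\cK^{\infty}\subseteq\cD^{n}\cap\cK^{n}$ and, by the previous paragraph, $\Sigma g\in\cC^{n}(\R_+)$, so the characterization of $\cD^n\cap\cK^n$ recalled before Theorem~\ref{thm:SigmaEx3} tells us that $(\Sigma g)^{(n)}$ eventually tends monotonically to zero. For assertion (b), the statement about $g^{(q-1)}$ is Proposition~\ref{prop:44maisnSg}(b). For $(\Sigma g)^{(q)}$, complete monotonicity of $g^{(q-1)}$ gives $(-1)^{m-q+1}g^{(m)}\geq 0$ on $\R_+$ for every $m\geq q$, that is, $g\in\cK^{m-1}_{(-1)^{m-q+1}}(\R_+)$; and since $m-1\geq p$ and $g\in\cD^{m-1}\cap\cK^{m-1}$ for such $m$, the well-posedness of $\Sigma$ identifies $\Sigma g$ with the solution furnished by Theorem~\ref{thm:SigmaEx3} applied with parameter $m-1$, which is $(m-1)$-convex precisely where $g$ is $(m-1)$-concave. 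Hence $\Sigma g\in\cK^{m-1}_{(-1)^{m-q}}(\R_+)$, i.e.\ $(-1)^{m-q}(\Sigma g)^{(m)}\geq 0$ on $\R_+$ for all $m\geq q$; writing $m=q+k$ this reads $(-1)^{k}\big((\Sigma g)^{(q)}\big)^{(k)}\geq 0$ for all $k\in\N$, so $(\Sigma g)^{(q)}$ is completely monotone (if $g$ was not negated at the outset, then $(\Sigma g)^{(q)}$ or its negative is).

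Finally, the concluding sentence follows from (b): since $(\Sigma g)^{(q)}$ or $-(\Sigma g)^{(q)}$ is completely monotone and $\Sigma g\in\cC^{\infty}(\R_+)$, Theorem~\ref{thm:21BerLiTh}---which remains valid, as noted, with ``absolutely monotone'' replaced by ``completely monotone''---shows that $\Sigma g$ is real analytic on $\R_+$, and Theorem~\ref{thm:MainNew35}, applied to $\pm\Sigma g$ and using that the Newton series is linear in the function, shows that $\Sigma g$ admits a Newton series expansion on $\R_+$ at every $a>0$. I expect the infinite-differentiability step to be the main obstacle: the temptation is to differentiate the limit \eqref{eq:ffnp6} term by term, whereas the efficient route is to notice that eventual higher-order convexity already provides local $\cC^{\infty}$-smoothness, which the difference equation then carries back across the whole half-line.
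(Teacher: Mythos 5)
Your proposal is correct, and its overall skeleton (Proposition~\ref{prop:44maisnSg} plus Theorem~\ref{thm:SigmaEx3} plus the dictionary between complete monotonicity and alternating higher-order convexity) is the same as the paper's; the only real divergence is how you obtain the smoothness of $\Sigma g$. The paper gets it in one stroke: since $g^{(q-1)}$ (after a possible negation) is completely monotone on \emph{all} of $\R_+$, we have $g\in\cD^{n-1}\cap\cK^{n-1}_{(-1)^{n-q+1}}(\R_+)$ for every $n\geq q$, so the sign statement in Theorem~\ref{thm:SigmaEx3} (applied with the right-unbounded subinterval taken to be $\R_+$ itself) yields $\Sigma g\in\cK^{n-1}_{(-1)^{n-q}}(\R_+)$ for all $n\geq q$, and Eq.~\eqref{eq:Incl32p} then gives $\Sigma g\in\cC^{\infty}(\R_+)$ and the complete monotonicity of $(\Sigma g)^{(q)}$ simultaneously. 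You instead use only the eventual information $\Sigma g\in\cK^{\infty}$ to get $\cC^n$-regularity on some $(M,\infty)$ and then propagate it leftward through $\Sigma g(x)=\Sigma g(x+1)-g(x)$; this is valid (it is exactly the technique the paper reserves for the corollary, via identity~\eqref{eq:gSgfAk2}, where the hypotheses hold only on a subinterval), but in the present setting it is a detour: the global convexity classes $\cK^{m-1}_{(-1)^{m-q}}(\R_+)$ that you derive anyway in your proof of assertion (b) already give $\cC^{\infty}(\R_+)$ directly via \eqref{eq:Incl32p}, making the propagation step redundant. A small point in your favour: you are explicit that the characterization of $\cD^n\cap\cK^n$ used for assertion (a) needs $\Sigma g\in\cC^n(\R_+)$, and you arrange the argument so that smoothness is available first, whereas the paper states (a) before the differentiability is formally established.
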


\begin{proof}
By assertion (a) of Proposition~\ref{prop:44maisnSg}, we obtain $g\in\cD^n\cap\cK^n$ for all $n\geq p$. Then, by Theorem~\ref{thm:SigmaEx3}, it follows that $\Sigma g$ exists and satisfies $\Sigma g\in\cD^n\cap\cK^n$ for all $n\geq p+1$, which proves assertion (a).

By assertion (b) of Proposition~\ref{prop:44maisnSg}, we may assume, for instance, that $g^{(q-1)}$ is completely monotone. It then follows that
$$
g ~\in ~ \cD^{n-1}\cap\,\cK^{n-1}_{(-1)^{n-q+1}}\quad\text{for all $n\geq q\geq p+1$}.
$$
Applying Theorem~\ref{thm:SigmaEx3} once again, we obtain
$$
\Sigma g ~\in ~ \cK^{n-1}_{(-1)^{n-q}}(\R_+)\quad\text{for all $n\geq q$}.
$$
Combining this with Eq.~\eqref{eq:Incl32p}, we conclude that $\Sigma g$ is infinitely differentiable and that $(\Sigma g)^{(q)}$ is completely monotone, which establishes assertion (b). The final part of the statement then follows immediately, as in Proposition~\ref{prop:44maisnSg}.
\end{proof}

\begin{remark}\label{rem:Dpq}
Under the assumptions of Proposition~\ref{prop:44maisnSg}, the condition that $g^{(p)}$ eventually tends monotonically to zero is clearly equivalent to requiring that $g\in\cD^p$. In light of this observation, we derive immediately the following consequence:

\smallskip

\noindent\emph{If an infinitely differentiable function $g\colon\,\R_+\to\R$ lies in $\cD^p$ for some $p\in\N$, and if $g^{(q)}$ is regularly monotone for some $q>p$ , then $g^{(q-1)}$ or $-g^{(q-1)}$ is completely monotone.}
\end{remark}

We also present the following corollary, which states that if the assumptions of Theorem~\ref{thm:44maisnSg} hold for the restriction of $g$ to a right-unbounded open interval of\/ $\R_+$, and if $g$ is real analytic (resp.\ admits a Newton series expansion on $\R_+$ at every $a>0$), then the same property holds for $\Sigma g$.

\begin{corollary}
Let $p,q\in\N$ with $p<q$, let $I$ be a fixed right-unbounded interval of\/ $\R_+$, and let $g\colon\,\R_+\to\R$ be a function such that $g|_I$ is infinitely differentiable. Assume further that the following two conditions hold:
\begin{itemize}
\item[(a)] $g|_I^{(p)}$ eventually tends monotonically to zero.
\item[(b)] $g|_I^{(q)}$ is regularly monotone.
\end{itemize}
Suppose also that $g$ is real analytic (resp.\ admits a Newton series expansion on $\R_+$ at every point $a>0$). Then the function $\Sigma g$ exists and is real analytic (resp.\ admits a Newton series expansion on $\R_+$ at every point $a>0$).
\end{corollary}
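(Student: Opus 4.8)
The plan is to prove the statement by first \emph{localizing} everything to the right-unbounded \emph{open} subinterval $J=\operatorname{int}I$ of $\R_+$ — passing from $I$ to $J$ affects neither hypotheses (a)--(b), nor the existence of $\Sigma g$, nor the assumption that $g$ is real analytic (resp.\ admits a Newton series) on $\R_+$ — and then \emph{globalizing} back to $\R_+$ via the difference equation $\Delta\Sigma g=g$. For the localization, note that the proof of Proposition~\ref{prop:44maisnSg} uses only Lemma~\ref{lemma:44main} together with the higher-order convexity calculus, and not the operator $\Sigma$ itself, so it applies verbatim with $\R_+$ replaced by $J$. This yields, from (a)--(b): $g|_J^{(n)}$ eventually tends monotonically to zero for every $n\ge p$; $g\in\cD^n\cap\cK^n$ for all $n\ge p$; and, after negating $g$ if necessary, $g|_J^{(q-1)}$ is completely monotone on $J$, i.e.\ $g\in\cK^{n-1}_{(-1)^{n-q+1}}(J)$ for all $n\ge q$. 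Since $g\in\cD^p\cap\cK^p$, Theorem~\ref{thm:SigmaEx3} guarantees that $\Sigma g$ exists; and since, for each $n\ge q$, $J$ is a right-unbounded subinterval of $\R_+$ on which $g$ is $(n-1)$-concave or $(n-1)$-convex, that same theorem (applied with parameter $n-1$, using the well-posedness of $\Sigma$) gives $\Sigma g\in\cK^{n-1}_{(-1)^{n-q}}(J)$ for all $n\ge q$. By \eqref{eq:Incl32p} this forces $\Sigma g\in\cC^\infty(J)$, and, translated back into derivatives, it says that $(\Sigma g)|_J$ or its negative has a completely monotone $q$-th derivative. Hence $\Sigma g|_J$ is real analytic on $J$ by Theorem~\ref{thm:21BerLiTh}, and admits a Newton series expansion on $J$ at every $a\in J$ by Theorem~\ref{thm:MainNew35}.

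Globalization, analytic case. Given $x_0\in\R_+$, pick $m\in\N$ with $x_0+m\in J$; iterating $\Delta\Sigma g=g$ gives $\Sigma g(x)=\Sigma g(x+m)-\sum_{i=0}^{m-1}g(x+i)$ for $x\in\R_+$, and near $x_0$ the right-hand side is real analytic, since $g$ is real analytic on $\R_+$ and $\Sigma g$ is real analytic near $x_0+m\in J$. Hence $\Sigma g$ is real analytic on $\R_+$.

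Globalization, Newton series case. Fix $a>0$ and let $S_a(x)=\Sigma g(a)+\sum_{k\ge 1}\tchoose{x-a}{k}\Delta^{k-1}g(a)$ be the formal Newton series of $\Sigma g$ at $a$ (using $\Delta^k(\Sigma g)(a)=\Delta^{k-1}g(a)$ for $k\ge 1$). I would first show $S_a$ converges on all of $\R_+$: writing $\tchoose{x-a}{k}=\tchoose{x-a}{k-1}\,\frac{x-a-k+1}{k}$ and using that the Newton series $\sum_j\tchoose{x-a}{j}\Delta^j g(a)=g(x)$ converges (a hypothesis, valid at every base point), a summation-by-parts (Dirichlet) argument yields convergence. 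A direct comparison of partial sums at the base points $a$ and $a+1$ gives $S_{a+1}^{(N)}(x)-S_a^{(N)}(x)=\tchoose{x-a-1}{N}\Delta^N g(a)$; since $\tchoose{x-a-1}{N}\Delta^N g(a)=\frac{x-a-N}{x-a}\tchoose{x-a}{N}\Delta^N g(a)$, since $S_a$ and $S_{a+1}$ both converge, and since $\sum_N\tchoose{x-a}{N}\Delta^N g(a)$ converges, one deduces $\tchoose{x-a-1}{N}\Delta^N g(a)\to 0$, whence $S_{a+1}=S_a$ on $\R_+$ and so $S_a=S_{a+m}$ for every $m\in\N$. Finally, for $a\in J$ the function $S_a-\Sigma g$ vanishes on $J$ (by the localization and Theorem~\ref{thm:MainNew35}, since the relevant Newton coefficients of $\Sigma g|_J$ at $a$ are exactly $\Sigma g(a)$ and $\Delta^{k-1}g(a)$), and satisfies $\Delta(S_a-\Sigma g)=g-g=0$, hence is $1$-periodic on $\R_+$; as $J$ contains an interval of length $>1$, it follows that $S_a=\Sigma g$ on $\R_+$. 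For an arbitrary $a>0$ one then picks $m\in\N$ with $a+m\in J$, so that $S_a=S_{a+m}=\Sigma g$ on $\R_+$; that is, $\Sigma g$ admits the Newton series expansion on $\R_+$ at $a$.

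I expect the main obstacle to be the Newton-series globalization: proving that the candidate series $S_a$ converges on all of $\R_+$ at an \emph{arbitrary} base point $a>0$, and, above all, that it is invariant under the shift $a\mapsto a+1$. This shift-invariance is precisely what transports the expansion from the far interval $J$ (where Theorem~\ref{thm:MainNew35} supplies it) back to every point of $\R_+$. By comparison, the localization and the analytic globalization are routine once Theorems~\ref{thm:SigmaEx3}, \ref{thm:21BerLiTh} and \ref{thm:MainNew35} are available.
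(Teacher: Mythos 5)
Your proposal is correct, and while your localization step and your analytic globalization (iterating $\Delta\Sigma g=g$ to pull real analyticity from $J$ back to $\R_+$) coincide with what the paper does, your Newton-series globalization takes a genuinely different route. The paper never introduces a candidate series at an arbitrary base point: it descends one unit at a time, writing $\Sigma g(x)=\Sigma g(x+1)-g(x)$ for $a,x\in(n-1,\infty)$, expanding $\Sigma g(x+1)$ by its already-established Newton series at $a+1$ (valid on $(n,\infty)$) and $g(x)$ by its assumed series at $a$, and recombining the two convergent series termwise via $\Delta^k\Sigma g(a+1)=\Delta^{k-1}g(a+1)=\Delta^{k}g(a)+\Delta^{k-1}g(a)$, which lands directly on the expansion of $\Sigma g$ at $a$ valid on $(n-1,\infty)$; finitely many such steps reach $\R_+$, and all convergence is inherited from the two known expansions, so no separate convergence, shift-invariance, or uniqueness argument is needed. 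You instead (i) prove convergence of the candidate series $S_a$ on all of $\R_+$ by Abel/Dirichlet summation (sound: the ratio $\tchoose{x-a}{k}\big/\tchoose{x-a}{k-1}=(x-a-k+1)/k$ is monotone and bounded), (ii) prove $S_{a+1}=S_a$ via the interpolation-polynomial telescoping identity $S_{a+1}^{(N)}-S_a^{(N)}=\tchoose{x-a-1}{N}\Delta^{N}g(a)$ together with the (correct, though left implicit) observation that if this quantity had a nonzero limit then the terms $\tchoose{x-a}{N}\Delta^N g(a)$ would be asymptotic to a constant times $1/N$ with eventually constant sign, contradicting convergence of $g$'s Newton series at $a$, and (iii) identify $S_a$ with $\Sigma g$ through $\Delta S_a=g$ and $1$-periodicity of the discrepancy, which vanishes on $J$. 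The only unaddressed point is $x=a$ in step (ii), where the factor $(x-a-N)/(x-a)$ is undefined; but there $S_a(a)=\Sigma g(a)$ holds trivially, so nothing is lost. What your route buys is a standalone statement that the Newton series of $\Sigma g$ at an arbitrary base point converges on all of $\R_+$, plus a base-point shift lemma of independent interest; what the paper's route buys is economy --- the entire globalization is a short termwise manipulation of two series whose convergence is already guaranteed, with no Tauberian-flavored step and no appeal to periodicity.
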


\begin{proof}
Let $n\in\N^*\cap I$ and let the function $g\colon\,\R_+\to\R$ satisfy the stated conditions. Then by Theorem~\ref{thm:SigmaEx3}, Proposition~\ref{prop:44maisnSg}, and Theorem~\ref{thm:44maisnSg}, the function $\Sigma g$ exists, and both $g$ and $\Sigma g$ are real analytic and admit a Newton series expansion on the interval $(n,\infty)$ at every point $a>n$. To prove the corollary, it is enough to show that these functions are also real analytic (resp.\ admit a Newton series expansion at every point $a>n-1$) on the larger interval $(n-1,\infty)$.

Suppose first that $g$ is real analytic. Then, real analyticity of $\Sigma g$ is immediate from the identity
\begin{equation}\label{eq:gSgfAk2}
\Sigma g(x) ~=~ \Sigma g(x+1)-g(x)\qquad (x>0),
\end{equation}
which expresses $\Sigma g$ in terms of real-analytic functions on $(n-1,\infty)$.

Suppose now that $g$ admits a Newton series expansion on $(n,\infty)$ at every point $a>n$ and let us show that $\Sigma g$ admits a Newton series expansion on $(n-1,\infty)$ at every point $a>n-1$. Let $a,x\in (n-1,\infty)$. Using \eqref{eq:gSgfAk2} and the assumed Newton expansions of both $g$ and $\Sigma g$ on $(n,\infty)$, we obtain:
\begin{eqnarray*}
\Sigma g(x) &=& \sum_{k=0}^{\infty}\tchoose{x+1-a-1}{k}\,\Delta^k\Sigma g(a+1)-\sum_{k=0}^{\infty}\tchoose{x-a}{k}\,\Delta^kg(a)\\
&=& \Sigma g(a+1)-g(a) + \sum_{k=1}^{\infty}\tchoose{x-a}{k}\,\Delta^{k-1} g(a+1)-\sum_{k=1}^{\infty}\tchoose{x-a}{k}\,\Delta^kg(a).
\end{eqnarray*}
Now, using \eqref{eq:gSgfAk2} once more, along with the trivial identity $g(a+1)=\Delta g(a)+g(a)$, we simplify the expression above to obtain:
$$
\Sigma g(x) ~=~ \Sigma g(a) + \sum_{k=1}^{\infty}\tchoose{x-a}{k}\,\Delta^{k-1} g(a) ~=~ \sum_{k=0}^{\infty}\tchoose{x-a}{k}\,\Delta^k\Sigma g(a).
$$
This completes the proof.
\end{proof}

We conclude this section by presenting two noteworthy examples that demonstrate the application of Theorem~\ref{thm:44maisnSg}.

\begin{example}[Log-gamma Function]
Returning to the functions $g(x)=\ln x$ and $\Sigma g(x)=\ln\Gamma(x)$ defined on $\R_+$ (see Example~\ref{ex:Bohr}), it is straightforward to verify that the assumptions of Theorem~\ref{thm:44maisnSg} are satisfied with $p=1$, $q=2$, and $I=\R_+$. It follows that the function $\ln\Gamma(x)$ is real analytic and admits a Newton series expansion on $\R_+$ at every point $a>0$. Taking $a=1$ for instance, we obtain the following Newton series expansion
$$
\ln\Gamma(x) ~=~ \sum_{k=1}^{\infty}\tchoose{x-1}{k}\, (\Delta^{k-1}_t\ln t)\big|_{t=1}\qquad (x>0),
$$
which was already studied by Hermite in 1900 \cite{Her01}; see also Graham \emph{et al.}~\cite[p.~192]{GraKnuPat94} for further details.
\end{example}

\begin{example}[Stern's Series]
The restriction of the function $g(x)=1/x$ to $\R_+$ clearly satisfies the assumptions of Theorem~\ref{thm:SigmaEx3} with $p=0$, and hence the corresponding principal indefinite sum (up to an additive constant) is given by
$$
\Sigma g(x) ~=~ -\frac{1}{x}+\sum_{k=1}^{\infty}\Big(\frac{1}{k}-\frac{1}{x+k}\Big)\qquad (x>0).
$$
It follows (see \cite[Section 10.2]{MarZen22} and its references) that
$$
\Sigma g(x) ~=~ H_{x-1} ~=~ \psi(x)+\gamma\qquad (x>0),
$$
where $x\mapsto H_x$ denotes the \emph{harmonic number function}, $\psi(x)=\Gamma'(x)/\Gamma(x)$ is the \emph{digamma function}, and $\gamma$ is \emph{Euler's constant}.

The function $g(x)=1/x$ on $\R_+$ also satisfies the assumptions of Theorem~\ref{thm:44maisnSg} with $p=0$, $q=1$, and $I=\R_+$. It follows that $\Sigma g(x)$ is real analytic and admits a Newton series expansion on $\R_+$ at every point $a>0$. Taking $a=1$ for instance, we obtain the following series representation (see also Example~\ref{ex:1x})
$$
\Sigma g(x) ~=~ \sum_{k=1}^{\infty}\tchoose{x-1}{k}\,\Delta_x^{k-1}\frac{1}{x}\,\Big|_{x=1} ~=~ \sum_{k=1}^{\infty}\tchoose{x-1}{k}\,\frac{(-1)^{k-1}}{k}\qquad (x>0),
$$
which is commonly referred to as \emph{Stern's series} (see, e.g., \cite[Theorem~3.8]{Sri07}). 
\end{example}

\section{Concluding Remarks}

We have established the following Newtonian analog of Bernstein's little theorem on a right-unbounded open interval:

\smallskip

\noindent\emph{Any real-valued function defined on a right-unbounded open interval admits a Newton series expansion at every point of that interval, provided that the function itself, or one of its derivatives, is completely monotone}.

\smallskip

We have also applied this result---along with Bernstein's little theorem---to the study of principal indefinite sums.

We summarize the main results and observations of this paper in terms of set identities and inclusions as follows. For any $p\in\N$ and any real open interval $I$, define the following classes of functions:
\begin{eqnarray*}
\cAM_{\pm 1}^p(I) &=& \{f\in\cC^{\infty}(I) : \pm f^{(p+n)}\geq 0,~\text{for all $n\in\N$}\},\\
\cCM_{\pm 1}^p(I) &=& \{f\in\cC^{\infty}(I) : \pm(-1)^n f^{(p+n)}\geq 0,~\text{for all $n\in\N$}\},\\
\cRM^p(I) &=& \{f\in\cC^{\infty}(I) : f^{(p+n)}\geq 0~\text{or}~f^{(p+n)}\leq 0,~\text{for all $n\in\N$}\}.
\end{eqnarray*}
In other words, $\cAM_{\pm 1}^p(I)$ (resp.\ $\cCM_{\pm 1}^p(I)$) consists of functions $f\in\cC^{\infty}(I)$ for which $\pm f^{(p)}$ is absolutely monotone (resp.\ completely monotone). Similarly, $\cRM^p(I)$ is the class of functions $f\in\cC^{\infty}(I)$ for which $f^{(p)}$ is regularly monotone.

The following identities provide characterizations of these function classes in terms of higher-order convexity properties:
\begin{eqnarray*}
\cAM_{\pm 1}^p(I) &=& \bigcap_{n\geq p}\,\cK_{\pm 1}^{n-1}(I),\\
\cCM_{\pm 1}^p(I) &=& \bigcap_{n\geq p}\,\cK_{\pm (-1)^{n-p}}^{n-1}(I),\\
\cRM^p(I) &=& \bigcap_{n\geq p}\,\cK^{n-1}(I).
\end{eqnarray*}
Using this notation, Bernstein's little theorem (see Theorem~\ref{thm:21BerLiTh}), along with its extension to regularly monotone functions, can be expressed through the following inclusions:
\begin{eqnarray*}
\liminf_{n\to\infty}\,\cK_{\pm 1}^{n-1}(I) &=& \bigcup_{p\geq 0}\,\cAM^p_{\pm 1}(I) ~\subset ~ \cC^{\omega}(I),\\
\liminf_{n\to\infty}\,\cK^{n-1}(I) &=& \bigcup_{p\geq 0}\,\cRM^p(I) ~\subset ~ \cC^{\omega}(I),
\end{eqnarray*}
where $\cC^{\omega}(I)$ denotes the space of real-analytic functions on $I$. 

Suppose now that $I$ is a right-unbounded open interval of\/ $\R_+$. Then, using Remark~\ref{rem:Dpq}, we obtain the following inclusion:
$$
\cD^p\cap\cRM^q(I)~\subset ~ \cCM^{q-1}_1(I)~\cup ~ \cCM^{q-1}_{-1}(I)\qquad (p,q\in\N,~p<q).
$$

Let also $\mathcal{N}(I)$ denote the class of functions $f\colon\, I\to\R$ that admit a Newton series expansion on $I$ at every point $a\in I$. With this notation, our main result can be expressed through the following inclusions:
$$
\liminf_{n\to\infty}\,\cK_{\pm (-1)^n}^{n-1}(I) ~=~ \bigcup_{p\geq 0}\cCM_{\pm (-1)^p}^p(I) ~\subset ~ \mathcal{N}(I) ~\subset ~ \cC^{\omega}(I),
$$
where the final inclusion follows from a classical result of Gel'fond~\cite{Gel71}, which implies that any (real) Newton series on $I$ defines a real-analytic function on $I$. Lastly, we have also observed (cf.\ Remark~\ref{rem:BinThm}) the following significant fact:
$$
\cRM^0(I) ~ \not\subset ~ \bigcup_{a\in I}\mathcal{N}_a(I),
$$
where, for any $a\in I$, the notation $\mathcal{N}_a(I)$ stands for the class of functions $f\colon\, I\to\R$ that admit a Newton series expansion on $I$ at $a$.

In light of the latter inclusions, it becomes clear that the class
$$
\mathcal{N}(I) ~=~ \bigcap_{a\in I}\mathcal{N}_a(I)
$$
deserves thorough investigation and, ideally, a natural characterization. Given that the theory of Newton series expansions has remained relatively underexplored in recent decades, developing such a characterization would be both a valuable and nontrivial contribution.

\section*{Declaration of competing interest}

The authors declare that they have no conflict of interest.


\end{document}